\newtheorem{thm}{Theorem}%[section]
 \newtheorem{lemma}[thm]{Lemma}
\newtheorem{prop}[thm]{Proposition}
\theoremstyle{definition}
\def \no#1#2#3 {{\bf #1} (#3), #2.}
\def \eds#1#2#3 {#1, #2, #3.}
\def\R{{\mathbb R}}
\def\d{{\rm d}}
\def\T{{\mathbb T}}
\def\:{{\colon}}
\def\be#1{\begin{equation}\label{#1}}
\def\ee{\end{equation}}
\def\<{\langle}
\def\>{\rangle}
\def\coloneqq{:=}
\newcommand{\na}{\nabla}
\newcommand{\lec}{\lesssim}
\newcommand{\bs}{\begin{split}}
\newcommand{\essss}{\end{split}}
\renewcommand{\lec}{\lesssim}
\renewcommand{\div}{\operatorname{div}}
\newcommand{\eqnb}{\begin{equation}}
\newcommand{\eqne}{\end{equation}}
\newcommand{\ii}{\mathrm{i}}
\renewcommand{\ee}{\mathrm{e}}
\newcommand{\p}{\partial}
\newcommand{\re}{\mathrm{Re}}
\newcommand{\vv }{{v}}
\renewcommand{\aa }{{\alpha }}
\newcommand{\ww }{{w}}
\newcommand{\bb }{{\beta }}
\renewcommand{\T}{\mathbb{T}}
\renewcommand{\R}{\mathbb{R}}
\newcommand{\RR}{\mathbb{R}}
\newcommand{\Z}{\mathbb{Z}}
\renewcommand{\d}{\mathrm{d}}
\newcommand{\UB}{B^{d/p-2}}
\newcommand{\B}{B^{d/p-1}}
\newcommand{\BB}{B^{d/p}}
\newcommand{\BBB}{B^{d/p+1}}
\begin{document}

\title{Global-in-time stability of ground states of a pressureless hydrodynamic model of collective behaviour}

\author{Piotr B. Mucha, Wojciech S. O\.za\'{n}ski}
%\date{}
\address{
\begin{minipage}{\linewidth}
Piotr B. Mucha, \\
Institute of Applied Mathematics and Mechanics, University of Warsaw, 02-097 Warsaw, Poland\\\texttt{p.mucha@mimuw.edu.pl}\\  \\
Wojciech S. O\.za\'{n}ski\\
Department of Mathematics, University of Southern California, Los Angeles, CA 90089\\
\texttt{ozanski@usc.edu}
\end{minipage}
} 
%\email{}
%\subjclass[2010]{Primary: 05C??. Secondary: 05C??}
% \keywords{} 

\begin{abstract} 
We consider a pressureless hydrodynamic model of collective behaviour, which is concerned with a density function $\rho$ and a velocity field $v$ on the torus, and is described by the continuity equation for $\rho$, $\p_t \rho + \div (v\rho )=0$, and a compressible hydrodynamic equation for $v$, $\rho v_t + \rho v\cdot \nabla v - \Delta v = -\rho \nabla K  \rho$ with a forcing modelling collective behaviour related to the density $\rho$, 
%$\rho v_t + \rho v\cdot \nabla v - \Delta v = -\rho \nabla K \ast \rho$,
where $K$ stands for the interaction potential, defined as the solution to the Poisson equation on $\T^d$. We show global-in-time stability of the ground state $(\rho , v)=(1,0)$ if the perturbation $(\rho_0-1 ,v_0)$ satisfies $\| v_0 \|_{B^{d/p-1}_{p,1}(\T^d )} + \| \rho_0-1 \|_{B^{d/p}_{p,1}(\T^d )} \leq \epsilon$ for sufficiently small $\epsilon >0$. 
\end{abstract}
\maketitle

\section{Introduction}

The subject of this note is a model of motion of a pressureless gas driven by the following set of laws:
\eqnb\label{pressureless}
\begin{split}
\p_t \rho + \mathrm{div}\, (v\rho ) =0 ,& \qquad \\
\rho v_t + \rho v\cdot \nabla v - \Delta v = -\rho \nabla K  \rho & \qquad \mbox{ in } [0,\infty ) \times \T^d,
\end{split}
\eqne
considered, for simplicity, on the $d$-dimensional torus $\T^d=\R^d / (2\pi \Z )^d $. The first equation is the mass conservation, prescribing the dynamics of the density $\rho$ under the flow $v$, and the second one is the momentum equation.  Here $K=(-\Delta)^{-1}$ is the operator such that $\Psi \coloneqq K  \rho$ satisfies 
\eqnb\label{def_of_K_operator}
-\Delta \Psi = \rho - \{ \rho \}, \mbox{ \ where \ } \{ \rho \} \coloneqq \displaystyle \int \rho\, \d x
\eqne
and $\int \Psi =0$.  Here and below we use the short-hand notation $\int \equiv \int_{\T^d }$, and we will often omit ``$\d x$'', for brevity. 
We note that, since the average $\{ \rho \}$ is preserved by the flow, we assume, without loss of generality, that $\{ \rho \} =1$ for all times.

\smallskip 

Model \eqref{pressureless} arises as a nonlinear repulsion model of collective behaviour \cite{cwz_2018,ccp_2017,ha-tad}.
An exclusive feature of this type of systems is the lack of the internal force, represented in the 
classical mechanics by the pressure. Instead of it we consider an external force given by a repulsion of electromagnetic type of the from of the Poisson potential $K=(-\Delta)^{-1}$. If we assumed the presence of the pressure function (of type $\nabla p(\rho)$ for instance), then we would have obtained a variant of the compressible Navier-Stokes system. 

 In fact, there are a number of results for models with the pressure \cite{cwz_2018,ccp_2017} of type $p \sim \rho^m$. The case $m=0$ is not well-understood yet, which is partially our motivation to study system \eqref{pressureless}.  
Moreover, \eqref{pressureless} is also related to the models of self-gravitational gases \cite{AB,DLY,Mper}. However, the closest, in the authors' opinion, is a result regarding the pressureless Euler-Poisson system \cite{hadzic_jang}, see also \cite{CCZ}. The latter result is merely mono-dimensional and, instead of dissipation, a friction term is taken into account. What is interesting from the mathematical viewpoint is that the friction 
somehow gives better stability properties that dissipation, even in the case of a bounded domain. 
In contrast, for system \eqref{pressureless} we do obtain global-in-time existence of unique solutions for small data, but any exponential decay as $t\to \infty $ can not be expected.
This is a consequence of the fact that the spectrum of an operator coming from the linearization of (\ref{pressureless}) is not cut from zero, see \eqref{real_parts_lambda_k_s} below for details.
Another related result is concerned with a detailed analysis of the
mono-dimensional Euler-Poisson system \cite{ELT}, where many cases have been discussed. More dimensional cases require some modifications, see \cite{LT1}.
In the case $K\equiv 0$ we refer the reader to \cite{DMT}, where the system is analyzed in a nonstandard framework of the Lorentz spaces and time-weighted norms.

From the mathematical viewpoint, the key difficulty of \eqref{pressureless} is the lack of the effective viscous flux which relates the divergence of the velocity $v$ with the pressure $p(\rho)$ in the form
\begin{equation*}
    \div v - p(\rho).
\end{equation*}
 This quantity is often used in the theory of compressible Navier-Stokes system to smooth out the density, by proving its decay or integrability in time, which in turns gives enough compactness to yield existence of  weak solutions \cite{Feir,Lions}. 
In the case of \eqref{pressureless}, there is no such simple quantity with fine properties.
This is one of the reasons why the general analysis of systems of type (\ref{pressureless}) is at the borderline of the modern PDE techniques.\\

In order to analyze \eqref{pressureless} one could consider the quasi-stationary approximation  of (\ref{pressureless}), which leads to the following aggregation type equation
\begin{equation}\label{eq:agg}
    \partial_t \rho - (-\Delta)^{-1} \div (\rho \nabla K  \rho)=0.
\end{equation}
An analysis of the above system could deliver  possible static solutions to (\ref{pressureless}) and is related to the issue of stability. 

On the other hand, considering (\ref{pressureless}) from the viewpoint of the energy, in analogy to the compressible Navier-Stokes equation, we can test the momentum equation with $v$ to get
\begin{equation}\label{stability_heu}
    \frac12 \frac{\d}{\d t} \int (\rho |v|^2 + \rho K  \rho ) \d x 
    +\mu \int |\nabla v|^2 \d x =0.
\end{equation}
In order to understand the meaning of the  term involving $K$ we note that $\Psi \coloneqq K\rho = K(\rho -1)$ has zero average, which implies that 
\[
    \int \rho K  \rho\,  \d x = \int (\rho -1) K (\rho -1) \, \d x = \int (-\Delta \Psi) \Psi \, \d x      = \int |\nabla \Psi|^2\,  \d x = \|\rho -1 \|_{H^{-1}}^2.
\]

This shows that the energy $ \int \rho |v|^2 \d x + \|\rho -1\|^2_{H^{-1}}$ decreases in time, which suggests that system (\ref{pressureless}) is stable, at least around static solutions. It can be interpreted as the structure of the force $-\rho \nabla K  \rho$, which says that particles repel each other. This is related to the phenomenon of the electron gas \cite{Germain}. In order to retain positivity of the density $\rho$ for all times, we consider the case of a bounded domain.

The purpose of this note is to establish the first stability result of the ground states of \eqref{pressureless}. To this end, we  focus on the case of the torus $\T^d$, for the sake of simplicity. In such case the ground state of \eqref{pressureless} is $(\rho , v)=(1,0)$, and we prove global-in-time stability of this state. 

In order to obtain such stability result we  will make use of some tools from the theory of regular solutions for the compressible Navier-Stokes system. The first approach to such system is based on the $L^2$ setting \cite{Mat-Ni}. However we will extend the techniques from \cite{Mu1,MZaj1,MZaj2}, developed for the $L^p$
spaces, as well as from \cite{Dan1}, which focuses on the Besov space setting on the whole space $\R^3$. These methods can be further developed to yield the following.

\begin{thm}[Main result]\label{thm_main}
Given $d \geq 3$, $p\in (\min(d/2,2), d)$ there exists $\epsilon >0$ with the following property. For every $\rho_0-1 \in B^{d/p}_{p,1}(\T^d)$ and $v_0 \in B^{d/p-1}_{p,1}(\T^d)$ such that $\int \rho_0 v_0 =0$ and 
\[
\| v_0 \|_{B^{d/p-1}_{p,1}(\T^d)} + \| \rho_0-1 \|_{B^{d/p}_{p,1}(\T^d) } \leq \epsilon
\]
there exists a unique global in time solution $(\rho , v)$ of \eqref{pressureless}, such that
$$
\rho -1 \in C_b([0,\infty);B^{d/p}_{p,1}(\T^d)), \quad
v_t, \nabla^2 v \in L^1(0,\infty;B^{d/p-1}_{p,1}(\T^d)),
$$
with
\[
\| \rho -1 \|_{L^\infty \left((0,\infty ); B^{d/p}_{p,1} (\T^d)\right)} + \| v_t \|_{L^1 \left((0,\infty ); B^{d/p-1}_{p,1} (\T^d)\right)} + \| v \|_{L^1 \left( (0,\infty ); B^{d/p+1}_{p,1} (\T^d)\right) } \leq C \epsilon ,
\]
where $C=C(d,p)>1$ is a constant.
\end{thm}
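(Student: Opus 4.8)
The plan is to run a standard linearize–estimate–fixed-point argument in the critical Besov framework, following Danchin's approach for compressible Navier–Stokes but adapting it to the torus and to the Poisson forcing $-\rho\nabla K\rho$. Write $a := \rho - 1$, so that the system becomes $\p_t a + \div v = -\div(av)$ and $v_t - \Delta v + \nabla K a = -v\cdot\nabla v - a(v_t - \Delta v) - (1+a)^{-1}\cdot(\text{corrections}) + \ldots$; more precisely, divide the momentum equation by $\rho=1+a$ to isolate the linear principal part $v_t - \Delta v + \nabla K a$ on the left and collect all quadratic-and-higher terms on the right. Note that $\nabla K a = \nabla(-\Delta)^{-1}a$ is an order $-1$ operator, so the linearized system is $\p_t a + \div v = 0$, $v_t - \Delta v + \nabla(-\Delta)^{-1}a = 0$. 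The first key step is a sharp analysis of this linear block: take the Fourier transform on $\T^d$, and for each frequency $k \in \Z^d\setminus\{0\}$ the $2\times 2$ (or block) symbol has eigenvalues $\lambda_\pm(k)$ solving $\lambda^2 + |k|^2 \lambda + |k|^{-2}|k|^2 = \lambda^2 + |k|^2\lambda + 1 = 0$, hence $\lambda_\pm(k) = \tfrac12(-|k|^2 \pm \sqrt{|k|^4 - 4})$. For $|k|$ large, $\lambda_+ \approx -|k|^{-2} \to 0$ and $\lambda_- \approx -|k|^2$; the crucial point (alluded to as \eqref{real_parts_lambda_k_s}) is that $\mathrm{Re}\,\lambda_+(k) \sim -|k|^{-2}$ is \emph{not bounded away from zero}, which is exactly why no exponential decay holds, but it is still negative, which gives a damping of the low-frequency part of $a$ integrated in time. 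I would prove a linear estimate of the form
\[
\| a \|_{L^\infty_t \BB} + \| v \|_{L^\infty_t \B} + \| v_t, \nabla^2 v \|_{L^1_t \B} + \| a \|_{L^1_t \BB} \lesssim \| a_0 \|_{\BB} + \| v_0 \|_{\B}
\]
by a Littlewood–Paley/frequency-localized energy method, treating low frequencies (where the Poisson operator dominates and one exploits the hypocoercive structure of the coupling, e.g. an Hörmander-type multiplier or a modified energy functional $\mathcal{L}_j = \|v_j\|^2 + \|a_j\|^2 + \alpha\langle v_j, \nabla K a_j\rangle$ with $\alpha$ small) and high frequencies (where $-\Delta v$ provides parabolic smoothing and $a$ behaves like a damped transport quantity) separately.

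The second step is the nonlinear/product estimates. All terms on the right-hand side are at least quadratic: $\div(av)$, $v\cdot\nabla v$, $a\,v_t$, $a\,\Delta v$, and the commutator/remainder terms coming from $(1+a)^{-1} - 1 = -a/(1+a)$ expanded as a convergent series (valid since $\|a\|_{L^\infty} \lesssim \|a\|_{\BB} \lesssim \epsilon < 1$ by the critical embedding $\BB \hookrightarrow L^\infty$, which uses $p < \infty$). The admissible range $p \in (\min(d/2,2), d)$ is precisely what makes all the relevant Besov product laws work: one needs $\BB$ to be an algebra (true since $d/p > 0$), one needs $\B \cdot \BB \hookrightarrow \B$ and $\BB \cdot \B \hookrightarrow \B$ type estimates for the transport and forcing terms, and the lower bound on $p$ prevents the loss in paraproduct estimates at low regularity while $p<d$ keeps $d/p - 1 > 0$ so that $\B$ is also well-behaved. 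I would define the solution space $E$ as functions $(a,v)$ with $a \in C_b([0,\infty);\BB)\cap L^1(0,\infty;\BB)$ and $v$ with $v_t, \nabla^2 v \in L^1(0,\infty;\B)$, $v\in C_b([0,\infty);\B)$, normed by $X(a,v) := \|a\|_{L^\infty_t\BB} + \|a\|_{L^1_t\BB} + \|v\|_{L^\infty_t\B} + \|v_t\|_{L^1_t\B} + \|v\|_{L^1_t\BBB}$, and show that the map $\Phi$ sending $(\bar a,\bar v)$ to the solution of the linear system with the right-hand side built from $(\bar a,\bar v)$ maps the ball $\{X \le C\epsilon\}$ into itself and is a contraction there, using the linear estimate from Step 1 together with the bound $X(\text{RHS in }L^1_t\B\text{-type norms}) \lesssim X(\bar a,\bar v)^2$.

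The third, more routine, step is local existence and uniqueness to seed the fixed-point argument and to upgrade: a standard iteration scheme or Banach fixed point on a short time interval (where even without smallness the quadratic terms are controlled by the interval length), then the global bound from the contraction argument above shows the local solution does not blow up and extends to $[0,\infty)$; continuity in time in the strong topology $\BB$ (resp. $\B$) follows from the $L^1_t$ integrability of the time derivatives plus the equations, as in Danchin's work. I would also check the constraint $\int\rho_0 v_0 = 0$ is propagated (it corresponds to the zero-mode of the momentum, which by the structure of the equations and $\int\nabla K\rho = 0$ stays zero — actually one should verify the zero Fourier mode of $v$ stays controlled; since $K$ kills the mean of $\rho$, the $k=0$ mode of the forcing vanishes and the mean velocity satisfies $\p_t\int\rho v = 0$, so $\int\rho v \equiv 0$).

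The main obstacle I expect is Step 1, the linear estimate — specifically, extracting enough time-integrated control on $a$ (the $\|a\|_{L^1_t\BB}$ term, needed to close the nonlinear estimates on terms like $\div(av)$) from a linear operator whose low-frequency spectrum touches zero. Unlike the compressible Navier–Stokes case where the pressure term $\nabla a$ is order $1$ and gives genuine low-frequency damping, here $\nabla K a$ is order $-1$, so the damping rate $|k|^{-2}$ at low frequencies is very weak; on the torus, however, low frequencies are bounded below ($|k| \ge 1$), so $|k|^{-2}$ is bounded below by a positive constant, and this is exactly where working on $\T^d$ rather than $\R^d$ is essential and makes the $L^1$-in-time bound on $a$ attainable. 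Constructing the right frequency-localized Lyapunov functional that simultaneously captures the parabolic high-frequency smoothing of $v$ and this torus-specific low-frequency damping of $a$, uniformly in the Littlewood–Paley index, is the technical heart of the proof.
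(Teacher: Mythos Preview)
Your linear estimate is too optimistic, and this is where the scheme breaks. From your own eigenvalue computation $\lambda_+(k)\sim -|k|^{-2}$, the slow mode of $a_k$ satisfies $\int_0^\infty e^{-c|k|^{-2}t}\,dt\sim |k|^{2}$, so passing from $L^\infty_t$ to $L^1_t$ \emph{costs two derivatives}; the best one can extract from the linear block is $\|a-\{a\}\|_{L^1_t B^{d/p-2}}$, not $\|a\|_{L^1_t B^{d/p}}$. (Your diagnosis that the torus saves you because ``low frequencies are bounded below'' is inverted: the weak damping $|k|^{-2}\to 0$ occurs at \emph{high} frequencies, and no domain boundedness helps there.) The paper's Lemma on the linear system records exactly this two-derivative loss.

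With only $a\in L^1_t B^{d/p-2}$ available, your Eulerian fixed-point does not close: the transport contribution $v\cdot\nabla a$ inside $\div(av)$ must land in $L^1_t B^{d/p}$, but $\nabla a$ is only in $L^\infty_t B^{d/p-1}$ and no admissible product law manufactures the missing derivative from $v\in L^1_t B^{d/p+1}$. This derivative loss is the usual obstruction to treating a hyperbolic transport term as a perturbative source. The paper sidesteps it by passing to \emph{Lagrangian coordinates}: there the continuity equation becomes $a_t+(1+a)\,\div_u u=0$, so $\nabla a$ never appears on the right-hand side, and the weaker bound $a\in L^1_t B^{d/p-2}$ suffices (it is only used against the order $-1$ Poisson term). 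The price is that the forcing $\nabla_u(-\Delta_u)^{-1}a$ now involves the inverse Laplacian in twisted coordinates, and the technical heart of the paper is an elliptic estimate showing $(-\Delta_u)^{-1}$ is close to $(-\Delta)^{-1}$ in $B^{d/p}$ when $\|\nabla X-I\|_{B^{d/p}}$ is small. If you want to stay Eulerian you would need a genuinely different mechanism---e.g.\ keeping $v\cdot\nabla a$ on the left and running transport-equation/commutator estimates in $B^{d/p}_{p,1}$ \`a la Danchin---rather than the linearize-and-source scheme you describe.
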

Here $C_b (I; X)$ denotes the space of continuous and bounded functions from interval $I$ to a Banach space $X$. Here $B^s_{p,1} (\T^d )$ stand for the Besov space on the torus (see Section~\ref{sec_prelims} for details). In order to motivate this functional framework we first observe that the transport equation for $\rho$ gives the a priori bound
\eqnb\label{necessary}
\| \rho (t) \|_{L^\infty } \leq \| \rho_0 \|_{L^\infty } \exp \left( \int_0^t \| \div v(s) \|_{L^\infty }  \d s\right).
\eqne
This suggests that the condition
\eqnb\label{suggestion}
\div v \in L^1((0,\infty );L^\infty)
\eqne 
is necessary for any global well-posedness result. On the other hand, in order to construct the solution claimed by the above theorem, one would consider a linearization of \eqref{pressureless} around the ground state. In order to effectively analyze such linearization one would need to use maximal regularity of the heat equation $\p_t u - \Delta u =f $ in a space of the form $L^1((0,\infty ); X)$ where $X$ is some Banach space. However, it is well-known that if $X$ is reflexive (more precisely $UMD$,
see \cite{DHP}) then such maximal regularity holds  in $L^q(0,T; X)$ only for $1<q<\infty$. Thus, in order to reach the borderline case $q=1$, we need to find a non-reflexive Banach space $X$. This suggests that we should leave the classical $L^p$ framework and enter the universe of the Besov spaces. It naturally leads us to consider the spaces of the form $L^1((0,\infty); B^s_{p,1})$, considered by \cite{Dan-Mu,Dan-Mu2}. In fact, maximal regularity of the heat equation holds in $L^1 ((0,\infty ); B^s_{p,1})$ (see \eqref{max_reg} below), which is one of the most significant property of Besov spaces with the last index one. Moreover, $B_{p,1}^s \subset L^\infty$ for $s\geq d/p$, which, in light of \eqref{suggestion}, suggests that we should consider
\[
\na v \in L^1 \left( (0,\infty ); B^{d/p}_{p,1} \right),
\]
which naturally leads us to the functional setting considered in Theorem~\ref{thm_main}. 

Thanks to this choice of functional setting, the hyperbolic character of the continuity equation for $\rho$ can be removed. Moreover, due to the smallness assumption of Theorem~\ref{thm_main}, we expect that 
\[
\left\| \int_0^t |\nabla v (s,x ) |\d s \right\|_{L^\infty } <\frac12,
\]
which suggests that the Lagrangian coordinates should be well defined. In fact, the problem \eqref{pressureless} becomes simpler in such coordinates. In order to describe the main difficulties, we first introduce the Lagrangian setting.

Let $X(t,y) $ be the solution of the system
\[
 \frac{dX(t,y)}{dt}=v(t,X(t,y)), \qquad X|_{t=0}=y. 
\]
The Lagrangian coordinates $y$ are given by the relation
\begin{equation}\label{lagr_cord_int_form}
 X(t,y)\coloneqq y+\int_0^t v(\tau ,X(\tau ,y)) \d \tau .
\end{equation}
We set
\begin{equation}
 \eta(t,y)\coloneqq \rho(t,X(t,y)), \qquad\qquad u(t,y)\coloneqq v(t,X(t,y)).
\end{equation}
The transformation matrix reads
\begin{equation}\label{def_A}
A\coloneqq  \left( \frac{dX}{dy} \right)^{-1}=\left( I+\int_0^t \nabla u  \right)^{-1}.
\end{equation}
Since we assumed the average of the density is one, and we aim at analysis of the flow around this state, we introduce $a$ as follows
\begin{equation}\label{take_eta}
 \eta=1+a.
\end{equation}
The equations \eqref{pressureless} in Lagrangian coordinates become
\begin{equation}\label{eqs_lagrangian1}
 \begin{array}{lr}
  \eta_t + \eta \div_u u=0, & \\[7pt]
  \eta u_t -\Delta_u u=-\eta  \na_u (-\Delta_u )^{-1} a & \qquad \mbox{in \ } [0,\infty )\times \T^d, 
 \end{array}
\end{equation}
with initial conditions $ \eta|_{t=0}=\rho_0$, $ u|_{t=0}=v_0$, where we have denoted by $\na_u $, $\div_u$ and $\Delta_u$ the gradient and the Laplace operator, respectively, with respect to the $x$ variable, that is in the Eulerian setting. For example 
$$
\div_u u (t,y) = \left[ \div v (t,x) \right]_{x=X(t,y)}, \mbox{ \ and  \ } 
\Delta_u u (t,y) = \left[ \Delta v(t,x) \right]_{x=X(t,y)}.
$$
We will use the notation $\div$, $\na$ and $\Delta$ to denote the respective differential operators of $u$, $a$ or $\eta$ with respect to their spatial variable, namely $y$.
Similarly, we denote by $(-\Delta )^{-1}$ the solution operator of \eqref{def_of_K_operator} in variable $y$, and we have also denoted by $(-\Delta_u )^{-1} g(y)$ the solution operator in the Eulerian variables. Namely, since $a(t,y) = \left[ \rho (t,x) -1 \right]_{x=X(t,y)}$ we have
\[
\na_u (-\Delta_u )^{-1} a = \na_u \left[ (-\Delta )^{-1} (\rho -1) \right]_{x=X(t,y)} .
\]

We will show (in Section~\ref{sec_equiv} below) that Theorem~\ref{thm_main} is  equivalent to the following restatement in the Lagrangian coordinates. 

\begin{prop}[Main result in the Lagrangian coordinates]\label{prop_main}
Given $d \geq 3$, $p\in (\min(d/2,2), d)$ there exists $\epsilon >0$ with the following property. For every $a_0 \in B^{d/p}_{p,1}(\T^d)$ and 
$u_0 \in B^{d/p-1}_{p,1}(\T^d)$ with 
\[
 \| a_0 \|_{B^{d/p}_{p,1}(\T^d) } +\| u_0 \|_{B^{d/p-1}_{p,1}(\T^d)} \leq \epsilon,
\]
there exists a unique Lagrangian map $X$  and a pair $(a  , u)$ such that $X(t) -\mathrm{id} = \int_0^t u (\tau ) \d \tau $ holds for $t>0$, the equations \eqref{eqs_lagrangian1} hold on $(0,\infty ) \times \T^d$ with initial conditions $(a_0, u_0)$, and 
\[\begin{split}
&\| a \|_{L^\infty \left(  (0,\infty ); B^{d/p}_{p,1} (\T^d ) \right)} +  \| a_t \|_{L^1 \left(  (0,\infty ); B^{d/p}_{p,1} (\T^d ) \right)}   +  \| a - \{ a \} \|_{L^1 \left(  (0,\infty ); B^{d/p-2}_{p,1} (\T^d ) \right)}\\
&\hspace{1cm}+ \| u \|_{L^\infty \left(  (0,\infty ); B^{d/p-1}_{p,1} (\T^d ) \right)} +  \| u_t \|_{L^1 \left(  (0,\infty ); B^{d/p-1}_{p,1} (\T^d ) \right)} + \| u-\{ u \}  \|_{L^1 \left(  (0,\infty ); B^{d/p+1}_{p,1} (\T^d ) \right)}\\ 
&\hspace{1cm}+\| \na X- I  \|_{L^\infty \left(  (0,\infty ); B^{d/p}_{p,1} (\T^d ) \right)}\leq C \epsilon,
\end{split}
\]
where $C=C(d,p)>1$ is a constant.
\end{prop}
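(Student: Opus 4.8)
The plan is to realize $(a,u)$ as the unique fixed point of a contraction mapping on a small ball in the Banach space $E_T$ whose norm is the left-hand side of the displayed estimate in Proposition~\ref{prop_main} with $(0,\infty)$ replaced by $(0,T)$, with all constants independent of $T$, and then to let $T\to\infty$. The first step is to split \eqref{eqs_lagrangian1} into a linear part plus a quadratically small remainder. Using the Lagrangian mass law $\eta\,\det(\d X/\d y)=\rho_0$ and the Neumann expansion of $A=(I+\int_0^t\nabla u)^{-1}$ around $I$, one rewrites the system as
\begin{equation*}
\begin{split}
a_t+\div u &= F(a,u),\\
u_t-\Delta u+\nabla(-\Delta)^{-1}(a-\{a\}) &= G(a,u),
\end{split}
\end{equation*}
where $F=\div u-\eta\,\div_u u$ and $G$ gathers $(1-\eta)u_t$, $(\Delta_u-\Delta)u$, and $\nabla(-\Delta)^{-1}(a-\{a\})-\eta\,\nabla_u(-\Delta_u)^{-1}a$ (the averages $\{a\}$, $\{u\}$ are not conserved but remain $O(\epsilon)$ and are tracked separately). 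Each summand of $F$ and $G$ carries at least two factors drawn from $a$, $\int_0^t\nabla u$ and their derivatives, so $F$ and $G$ are quadratically small in the $E_T$-norm.

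The second, and central, step is the linear estimate: for the system above with source $(F,G)$ and data $(a_0,u_0)$ one should obtain $\|(a,u)\|_{E_T}\lesssim \|a_0\|_{B^{d/p}_{p,1}}+\|u_0\|_{B^{d/p-1}_{p,1}}+\|F\|_{L^1((0,T);B^{d/p}_{p,1})}+\|G\|_{L^1((0,T);B^{d/p-1}_{p,1})}$. I would prove this via a Littlewood--Paley decomposition. For high frequencies the block $(\Delta_j a,\Delta_j u)$ obeys, up to the source and the zero mode of $u$, an ODE system whose symbol has eigenvalues $\lambda_\pm(k)=\tfrac12\bigl(-|k|^2\pm\sqrt{|k|^4-4}\bigr)$, so that $\lambda_-\sim-|k|^2$ (a parabolic mode, essentially $\div u$) and $\lambda_+\sim-|k|^{-2}$ (a slow mode, essentially $a$); Duhamel's formula together with the decay $\mathrm e^{t\lambda_\pm(k)}$ yields the $2^{js}$-weighted bounds, and the $\ell^1$ summation over $j$ — legitimate precisely because of the last Besov index $1$ — gives the $L^1$-in-time maximal regularity, in the spirit of \eqref{max_reg}. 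The finitely many low frequencies are handled directly, using that all norms there are equivalent and that the $H^{-1}$-energy identity behind \eqref{stability_heu} provides coercivity. The two-derivative gap between $\lambda_+\sim-|k|^{-2}$ and $|k|^2$ is what produces the term $\|a-\{a\}\|_{L^1((0,T);B^{d/p-2}_{p,1})}$ and precludes any exponential rate; the operator $(-\Delta)^{-1}$ then buys these two derivatives back when the force in $G$ is estimated.

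The third step is to estimate the nonlinearities and close the argument. Using that $B^{d/p}_{p,1}(\T^d)$ is a Banach algebra embedded in $L^\infty$, the product law $B^{d/p}_{p,1}\cdot B^{d/p-1}_{p,1}\hookrightarrow B^{d/p-1}_{p,1}$, composition estimates for analytic functions of $a$ (for the series defining $A$), the bound $\|\nabla X-I\|_{L^\infty_t(B^{d/p}_{p,1})}\le\|u\|_{L^1_t(B^{d/p+1}_{p,1})}$, and the two-derivative gain of $(-\Delta)^{-1}$ — all valid in the range $p\in(\min(d/2,2),d)$, $d\ge 3$ dictated by these inequalities — one obtains $\|F\|_{L^1(B^{d/p}_{p,1})}+\|G\|_{L^1(B^{d/p-1}_{p,1})}\lesssim \|(a,u)\|_{E_T}^2$ as long as $\|(a,u)\|_{E_T}\le 1$. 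Combined with the linear estimate, for $\epsilon$ small the map $(\bar a,\bar u)\mapsto(a,u)$ sends the ball of radius $C_0\epsilon$ in $E_T$ into itself; an analogous computation on differences (carried out, if necessary, at slightly lower regularity) gives contraction. The resulting fixed point is the unique solution on $[0,T]$, and since $C_0$ and the radius do not depend on $T$ it extends to $[0,\infty)$ with the asserted bound; time-continuity of $a$, $u$ and $\nabla X$ follows from the standard argument for $B^{s}_{p,1}$-valued solutions, and $X=\mathrm{id}+\int_0^t u$ is a diffeomorphism since $\|\nabla X-I\|_{L^\infty}\lesssim\epsilon<1$.

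\textbf{Main obstacle.} The hard part is the linear estimate of the second step for this \emph{degenerate} coupled system: contrary to the compressible Navier--Stokes situation, the slow eigenvalue $\lambda_+(k)$ tends to $0$ as $|k|\to\infty$, so in high modes $a$ decays only like $\mathrm e^{-t|k|^{-2}}$, and propagating the attendant two-derivative loss through every frequency-localized Duhamel integral while keeping the $\ell^1(\Z)$ summation required for the endpoint $q=1$ in time, and checking that this loss is still compatible with the quadratic structure of $F$ and $G$ in the admissible range of $p$, is delicate. A secondary difficulty is the nonlocal, flow-dependent operator $\eta\,\nabla_u(-\Delta_u)^{-1}$: showing that its difference from $\nabla(-\Delta)^{-1}$ is quadratically small requires commutator and composition estimates between $(-\Delta)^{-1}$ and the near-identity Lagrangian map $X$ in these endpoint Besov spaces.
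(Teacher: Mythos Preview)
Your proposal is correct and follows essentially the same route as the paper: rewrite \eqref{eqs_lagrangian1} as the linear compressible Stokes system \eqref{lin_sys} plus a quadratic remainder, prove the $L^1$-in-time maximal regularity estimate for the linear part via the Fourier representation and the explicit roots $\lambda_k^\pm=\tfrac12\bigl(-|k|^2\pm\sqrt{|k|^4-4}\bigr)$, estimate the remainder quadratically using the algebra property of $B^{d/p}_{p,1}$ and a perturbative elliptic estimate for $(-\Delta_u)^{-1}$, and close by Banach contraction.

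The only noteworthy implementation differences are: (i) the paper decouples the linear system by passing to the scalar variable $d=\div u$, which satisfies a second-order ODE in each Fourier mode (see \eqref{eq_for_dk_fourier}), and then recovers $a$ from $a-\{a\}=d_t-\Delta d$ and $u$ from the heat equation $u_t-\Delta u=-\nabla K a$; you instead propose to diagonalize the $2\times2$ block for $(\Delta_j a,\Delta_j\div u)$ directly --- both lead to the same estimates; (ii) the paper works directly on $(0,\infty)$ and does the contraction at full regularity, rather than on $(0,T)$ with $T\to\infty$ and possibly lower regularity for differences; (iii) the paper needs no separate low-frequency argument or energy input --- the multiplier bound \eqref{each_mode_exp} handles all nonzero modes uniformly, so your appeal to the $H^{-1}$ energy identity for low modes is unnecessary. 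None of these affects correctness.
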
  
We note that in Proposition~\ref{prop_main} we only obtain the smallness the averages $\{ a \}, \{ u \}$ of $a,u$ that is uniform in time, while in the Eulerian coordinates (i.e.~in Theorem~\ref{thm_main}) we obtain $L^1$ control, which is a consequence of the conservation of mass $\int \rho$ and the conservation of momentum $\int \rho v$, see  \eqref{cons_prop} for details.

  We note that the claim of Proposition~\ref{prop_main} implies in particular that 
\[
    \nabla u \in L^1(\R_+;L^\infty) \mbox{ \ \ and \ \ }
    a \in L^\infty (\T^d \times \R_+).
\]

In order to prove Proposition~\ref{prop_main} we consider the linearization of \eqref{eqs_lagrangian1} the system around the ground state $(a,u)=(0,0)$, see \eqref{lin_sys} below. We establish well-posedness of the linearization in Lemma~\ref{lem_lin_system}. To this end, we apply the explicit formula for the linear system, and we use a multiplier theorem, which is a version of the 1939 Marcinkiewicz theorem \cite{Mar}. We find the maximal regularity estimate, which then determines the regularity framework  used in Proposition~\ref{prop_main}.

In order to consider the nonlinear problem (\ref{eqs_lagrangian1}), we note that the main difficulty of Proposition~\ref{prop_main} is the appearance of the inverse Laplacian $(-\Delta_u )^{-1}$ in the Eulerian coordinates in \eqref{eqs_lagrangian1}. This term, i.e. $\na_u (-\Delta_u )^{-1}a$ needs to be estimated in Besov spaces in Lagrangian coordinates, and this can be achieved by showing that elliptic estimates are stable with respect to the Lagrangian mapping $y \mapsto X(t,y)$, given smallness of the initial data, see \eqref{Delta_commutator} and \eqref{cubic_term} for example. Using this trick we prove Proposition~\ref{prop_main} by linearizing \eqref{eqs_lagrangian1} and then applying Banach Contraction Theorem to obtain a unique global-in-time solution for small data. 

We emphasize that the use of the Besov spaces seems irreplaceable, since, although we are working in a bounded domain, we are not able to obtain any exponential time decay. In fact, as mentioned above, the real spectrum of the linearized system \eqref{lin_sys} is not separated from zero, see \eqref{real_parts_lambda_k_s}.

\smallskip 

Finally we discuss possible directions coming from our result. Firstly, it seems possible to extend this analysis to other operators $K$.  Here we consider a very particular one, but  there is a zoo of other interesting and physically relevant examples, see \cite{CHV,Bed,BFH,CHM} for example. The next problem is to consider the case of the whole space $\R^3$, which seems to be more natural for problem arising from collective behaviors.
Such setting requires a more subtle functional setting crossing the standard definition of Besov spaces, which is related to a number of mathematical challenges which remain to be addressed. One of them is a natural assumption of finite mass $\int_{\R^d} \rho <\infty$, which implies decay of $\rho $ at spatial infinity. It is not clear what decay of $\rho$ should be assumed, but assuming compact support of $\rho$, the system suffers an elliptic degeneration, namely the term $\rho v_t$ just disappears.

%while motivated by physical observations of collective models (...is it? ...) makes the velocity equation in \eqref{pressureless} degenerate at large scales. 

We also note that even the change of the sign of $K$ in (\ref{pressureless}) results in an unstable system (recall \eqref{stability_heu}, see also \eqref{lambda_roots} below). In that case it seems natural to expect the density to converge to a single point, namely to a Dirac delta. 

\smallskip 

The structure of the note is as follows. In the next section we introduce the notion of Besov spaces on the torus, ${B}^s_{p,q} (\T^d)$. We discuss some basic properties of such spaces, including the Nikol'skij inequality \eqref{nikolskij1}, embeddings \eqref{embeddings}, as well as maximal regularity of the heat equation \eqref{max_reg}, multiplier properties \eqref{der_in_out}, product laws \eqref{algebra_prop1} and diffeomorphism invariance \eqref{diffeo_prop}. Some of the results we could not find in the literature, and so we provide proofs for the sake of completeness. In Section~\ref{sec_pf_thm} we first discuss a well-posedness result (Lemma~\ref{lem_lin_system}) of the linearization of the equations in the Lagrangian form  \eqref{eqs_lagrangian1} and then prove Proposition~\ref{prop_main}. Section~\ref{sec_equiv} is devoted to the proof of the claimed equivalence of the Eulerian setting (Theorem~\ref{thm_main}) and the Lagragian setting (Proposition~\ref{prop_main}).

\section{Preliminaries}\label{sec_prelims}
We denote by $A \lec B$ the inequality $A\lec C B$, where $C>0$ is a universal constant. If $C$ depends of some parameters, we denote those using  subscripts. By ``$\sim$'' we mean ``$\lec $ and $\gtrsim $''.
We will use the standard notation of the Sobolev space by $H^\beta \coloneqq  H^\beta (\R^d ) \equiv W^{\beta ,2} (\R^d)$. We will also write $L^p\equiv L^p (\T^d)$, and $\| \cdot \|_p \equiv \| \cdot \|_{L^p}$.

Given $u\colon \T^d \to \R$ and $k\in \Z^d$ we denote its $k$-th Fourier mode by $u_k \coloneqq \int_{\T^d } u(x) \ee^{\ii k\cdot x} \d x$.

Let $M\coloneqq \R^d \to \R$ be such that $M\in H^\beta $ for some $\beta > d/2$. Letting $\Lambda \subset \Z^d $ be a finite set, and letting $d_\Lambda \coloneqq \max_{k,l\in \Lambda } |k-l |$, we recall a Fourier multiplier inequality
\eqnb\label{multi_on_torus}
\left\| \sum_{k\in \Lambda } M(k) u_k \ee^{\ii k \cdot x} \right\|_p \leq C \| M_{d_\Lambda } \|_{H^\beta } \| u \|_p, \qquad p\in [1,\infty ], \beta >d/2,
\eqne
where $C=C(d,p,\beta )$, and $M_\lambda \coloneqq M (\lambda \cdot )$ denotes the $\lambda$-dilation of $M$. We refer the reader to Section~3.3.4 in \cite{st_book} for a proof of \eqref{multi_on_torus}. We recall the Nikol'skij inequality,
\eqnb\label{nikolskij}
\| f \|_q \lec_{p,q} d_{\Lambda }^{ \frac{d}{p} - \frac{d}{q} } \| f \|_p
\eqne
for $p,q\in [1,\infty ]$ such that $q\geq p$ and for $f\in L^p$ such that $f_k=0$ for $k\not \in \Lambda $, see Section~3.3.2 in \cite{st_book} for a proof. 

In order to define Besov spaces $B^s_{p,q}( \T^d)$ we first let $h \in C^\infty (R ; [0,1])$ be such that $\chi (z) =1$ for $z\leq 1 $ and $\chi (z) =0$ for $z\geq 2$, and we set
\[\phi_1 ( x) \coloneqq \chi ( |x|/2 ) - \chi (|x|), \qquad \phi_j  \coloneqq \phi_1 (2^{-j} \cdot  )\quad \text{ for } j\geq 2,\qquad \phi_{0 }(x) \coloneqq \chi (|x| ) 
\]
 for $x\in \R^d$. We also set
\[ \psi_j  \coloneqq \phi_1 (2^{-j} \cdot  )\quad \text{ and } \quad \psi_{j\pm k} \coloneqq \sum_{l=j-k}^{j+k} \psi_l \text{ for }j\in \Z, k\geq 0 
\]
For $m\geq 0$ let
\[
P_m u \coloneqq \sum_{k\in \Z^d } \phi_m (k ) u_k \ee^{\ii k\cdot x}.
\]
Note that
\eqnb\label{nikolskij1}
\| P_m u \|_q \lec_{p,q} 2^{\frac{md}{p} - \frac{md}{q}} \| P_m u \|_p,
\eqne
by the Nikol'skij inequality \eqref{nikolskij}.
Given $p,q\in [1,\infty ]$, $s\in \R$ we let $B_{p,q}^s$ denote the ...
\[
\| u \|_{B_{p,q}^s }^q \coloneqq \sum_{m\geq 0 } 2^{smq} \left\| P_m u \right\|_p^q 
\]
recall \cite[Definition~3.5.1(i)]{st_book}. In this work we will be only concerned with functions with vanishing mean, i.e. $\int_{\T^d} f =0$, for which the above sum can be taken over $m\geq 1$.

Note that $B_{p,q}^s$ is a Banach space by Theorem~1 in Section~3.5.1 in \cite{st_book}. We recall the embedding
\eqnb\label{embeddings}
B^{d/p+\delta }_{p,1} \subset B^{d/p}_{p,1} \subset C^0 (\T^d )
\eqne
for every $\delta > 0$, $p\in [1,\infty ]$, see \cite[p.~170]{st_book}. 

Suppose that $f,g \in L^1_{loc} ((0,\infty ); L^1 )$ are such that $\int_{\T^d} f(t) =\int_{\T^d} g(t) =0$ for each $t>0$ and that  
\eqnb\label{par_eq}
\p_t f - \Delta f =g
\eqne
hold in the sense of distributions in $\T^d\times \R_+$. Then 
\eqnb\label{max_reg}
\| f_t \|_{L^1 B^{s}_{p,1}} + \| f \|_{L^1 B^{s+2}_{p,1}} \lec \| g \|_{L^1 B^{s}_{p,1}}
\eqne
for every $p\in [1,\infty ]$, $s\in \R$. In order to verify \eqref{max_reg} we first note that the solution $f$ of \eqref{par_eq} can be characterized in terms of its Fourier coefficients,
\eqnb\label{par_eq_sol}
f_k (t) = \int_0^t \ee^{-k^2 (t-s) } g_k (s) \d s
\eqne
for every $k\in \Z^d \setminus \{0 \}$. Secondly, for every $\alpha >0$
\eqnb\label{each_mode_exp}
\left\| P_m \left( \sum_{k\in \Z^d } \ee^{-\alpha k^2 } g_k \ee^{\ii k \cdot x} \right) \right\|_p \lec_p \ee^{-c\alpha 2^{2m}} \| P_m g \|_p,
\eqne
where $c>0$ is a constant.

Let $m\geq 1$. We take $u\coloneqq P_m g$, $M(\xi ) \coloneqq \ee^{-\alpha |\xi |^2 } \phi_{m\pm 1}(\xi )$. We take $N\coloneqq [d/2]+1$, $\Lambda \coloneqq \{ k\in \Z^d \colon 2^{m-1} \leq |k| \leq 2^{m+1} \}$. We have $d_\Lambda \sim 2^m$, which gives that
\eqnb\label{mult_temp}
\begin{split}
\| M_{d_\Lambda } \|_{H^N}^2 &= \sum_{|\gamma |\leq N } d_{\Lambda }^{2|\gamma |}  \int |  D^\gamma  M ( d_{\Lambda } \xi ) |^2 \d \xi \lec \sum_{|\gamma |\leq N } 2^{(2|\gamma  |-d)m}  \int |  D^\alpha M  |^2  \\
&\lec  \sum_{|\gamma |\leq N } 2^{(2|\gamma  |-d)m}  \int_{ \{ 2^{m-2} \leq |\xi | \leq 2^{m+2} \} } \left( (2^{-m}+ | \alpha \xi | + \ldots + | \alpha \xi |^{ |\gamma | }) \ee^{-\alpha |\xi |^2 } \right)^2 \d \xi   \\
&\lec  \ee^{-\alpha 2^{2m-4}} \int \left( Q_N (\alpha |\xi |^2 )  \ee^{-\frac{\alpha |\xi |^2}{2} } \right)^2 \d \xi  \lec  \ee^{-\alpha 2^{2m-4}},
\end{split}
\eqne
where $Q_N$ denotes a polynomial of order $N$ and, in the second inequality, we obtained the term ``$2^{-m}$'' in the case when all derivatives fall onto $\phi_{m\pm 1}$. If $k$ derivatives fall on ``$\ee^{-\alpha |\xi |^2 }$'' we obtain ``$|\alpha \xi |^k$'', and then each of the other $|\gamma | - k$ derivatives give factors of $2^{-m}$, as $|\xi | \sim 2^m$ and $m\geq 1$. In the third inequality above we also used our choice of $N$, which implies that $2|\gamma |-d \leq 2$. 
Applying the multiplier inequality \eqref{multi_on_torus} gives \eqref{each_mode_exp}, as required.

Multiplying \eqref{par_eq_sol} by $\phi_m \ee^{\ii k\cdot x } $, summing in $k\in \Z^d$ and taking the $L^p$ norm we obtain
\[
\| P_m f(t) \|_p \leq \int_0^t \left\| P_m \left( \sum_{k\in \Z^d } \ee^{-(t-s) k^2 } g_k (s) \ee^{\ii k \cdot x} \right) \right\|_p  \d s \lec \int_0^t \ee^{-c(t-s) 2^{2m}} \| P_m g(s) \|_p \d s
\]
where we used \eqref{each_mode_exp} with $\alpha \coloneqq t-s$ in the last step. Integration over $t\in (0,\infty )$ and using Young's inequality $\| f\ast g \|_1 \leq \| f \|_1 \| g \|_1$ gives
\[
\int_0^\infty \| P_m f (t) \|_p \d t \lec 2^{-2m} \int_0^\infty \| P_m g (t) \|_p \d t
\]
for every $m$. Multiplying both sides by $2^{2m}$, summing in $m$ and applying the Tonneli theorem gives that $\| f \|_{L^1 B_{p,1}^{s+2}} \lec \| g \|_{L^1 B_{p,1}^{s}}$. This and the equation \eqref{par_eq} prove \eqref{max_reg}, as required.

As a simple corollary we note that an argument analogous to \eqref{mult_temp} shows that 
\eqnb\label{mult_ineqs}
\| P_m f \|_p \lec_p \| f \|_p, \qquad  \| P_m D^\gamma f \|_p \lec_p 2^{|\gamma |m} \| P_m f \|_p, \qquad \| P_m \Delta f \|_p \sim_p 2^{2m} \| P_m f \|_p
\eqne
for every $m\geq 1$, $p\in [1,\infty ]$ (by taking taking, respectively, $M(\xi ) = \phi_m (\xi )$, $M(\xi ) = \xi^\gamma \phi_{m\pm 1}(\xi )$, $M(\xi ) = |\xi |^2 \phi_{m\pm 1}(\xi )$ and $M(\xi ) = |\xi |^{-2} \phi_{m\pm 1}(\xi )$).
In particular
 \eqnb\label{der_in_out}
\|  D^\gamma f \|_{B^s_{p,q}} \lec \| f \|_{B^{s+|\gamma |}_{p,q}}, \qquad \|  \Delta f \|_{B^s_{p,q}} \sim \| f \|_{B^{s+2 }_{p,q}},
\eqne
for every $p,q\in [1,\infty ]$, $s \in \R$, given $\int f =0$.

We note that 
\eqnb\label{algebra_prop1}
\| fg \|_{B^s_{p,1}} \lec_\varepsilon \| f \|_{B^{d/p }_{p,1}} \| g \|_{B^{s}_{p,1}} 
\eqne
for $\varepsilon>0 $, $p\in [2, d)$, $s \in ( d/p , d/p] $, which can be proved in the same as the analogous claim for nonhomogeneous Besov spaces on $\R^3$, see Theorem~2(i) in Section~4.6.1 in \cite{rs_book}.

Finally we note that the $B^s_{p,1}$, for $s\in (0,1)$ norm is equivalent to the Lipschitz norm,   
\[
\| f \|_{B^s_{p,1}} \sim_{d,s,p} \| f \|_p + \int \left( \int \frac{|f(y)-f(x)|^p }{|y-x |^{p(d+s)}} \d y \right)^{\frac{1}{p}} \d x
\]
for $s\in (0,1)$, $p\in [1, \infty )$, see (18) on p.~169 in \cite{st_book}. See also (4) on p.~110 in \cite{triebel_83}.

 we can deduce from it that $B^s_{p,1}$ is invariant under diffeomorphisms for $s\in (0,1)$. Namely, given a diffeomorphism $Z \colon \T^d \to \T^d$ we have
\eqnb\label{diffeo_prop}
\| f \circ Z \|_{B^s_{p,1}} \sim_{s,d,p} C(\| \na Z \|_\infty , \| (\na Z )^{-1} \|_\infty ) \| f  \|_{B^s_{p,1}},
\eqne
for $s\in (0,1)$, $p\in [1,\infty ]$, by applying the change of variable $y\mapsto Z(y)$, using the Mean Value Theorem and estimating the Jacobian by the $L^\infty $ norms of $\na Z$ and $(\na Z)^{-1}$, see  Lemma~2.1.1 in \cite{dan_muc_memoir} for details. In what follows we will apply \eqref{diffeo_prop} for $s\coloneqq d/p-1 $, which belongs to $ (0,1)$, due to our restriction on $p$, namely $p\in (\min(d/2,2), d)$.

In what follows we will use a shorthand notation
\[
L^p B^s \equiv L^p ((0,\infty ); B^s_{p,1} (\T^d ) ).
\]

\section{Proof of Theorem~\ref{thm_main}}\label{sec_pf_thm} 

In this section we prove Proposition~\ref{prop_main}, which is equivalent to Theorem~\ref{thm_main} (see Section~\ref{sec_equiv} below).
We first consider the following compressible Stokes system,
\eqnb\label{lin_sys}
 \begin{array}{lcr }
  a_t + \div u =h & \mbox{ in } & \T^d \times \R_+,\\[7pt]
  u_t -\nu \Delta u + \nabla (K  a ) =g &
  \mbox{ in } & \T^d \times \R_+,\\[7pt] 
  a|_{t=0}=a_0, \qquad u|_{t=0} = u_0 & \mbox{ at } & \T^d,
 \end{array}
\eqne
where $g,h$ are given. This system is a linearization of \eqref{eqs_lagrangian1}, and the following lemma determines the types of spaces which we will use to estimate $u$ and $a$.

\begin{lemma}[Solution of the linear system]\label{lem_lin_system}
Given $s\in \R$, $p\in [1,\infty ]$, $a_0\in B^{s+1}_{p,1}$, $u_0\in B^{s}_{p,1}$, $g\in L^1 B^s_{p,1}$, $h\in L^1 B^{s+1}_{p,1}$ the system \eqref{lin_sys} admits a unique solution $(a,u)$ such that 
\eqnb\label{lin_ests}\begin{split}
\| a \|_{L^\infty B^{s+1}_{p,1}} + \| a_t \|_{L^1 B^{s+1}_{p,1} } &+\| a - \{ a \} \|_{L^1 B^{s-1}_{p,1} }+\|u\|_{L^\infty B^s_{p,1}} +\|u_t\|_{L^1B^s_{p,1}} + \|u-\{ u \} \|_{L^1B^{s+2}_{p,1}} \\
& \lec_{ \nu} \| a_0 \|_{B_{p,1}^{s+1}} + \| u_0 \|_{B_{p,1}^{s}} + \| h \|_{L^1 B^{s+1}_{p,1}}+ \| g \|_{L^1 B^{s}_{p,1}}.
\end{split}
\eqne
\end{lemma}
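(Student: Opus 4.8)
The plan is to diagonalise the system \eqref{lin_sys} mode by mode in Fourier space and then reassemble the Littlewood--Paley pieces. Write $a = \sum_k a_k(t) \ee^{\ii k\cdot x}$, $u = \sum_k u_k(t) \ee^{\ii k \cdot x}$ (and similarly for $g,h$), and fix $k\in \Z^d\setminus\{0\}$. Since $K = (-\Delta)^{-1}$ acts as multiplication by $|k|^{-2}$ and $\div u$ picks out the component of $u_k$ parallel to $k$, the system decouples: the divergence-free part $u_k^\perp := u_k - (u_k\cdot \hat k)\hat k$ solves a plain heat equation $\partial_t u_k^\perp + \nu |k|^2 u_k^\perp = g_k^\perp$, while the pair $(a_k, b_k)$ with $b_k := \ii k\cdot u_k = \widehat{\div u}(k)$ solves the $2\times 2$ linear ODE system
\[
\frac{\d}{\d t}\begin{pmatrix} a_k \\ b_k \end{pmatrix} = \begin{pmatrix} 0 & 1 \\ -1 & -\nu|k|^2 \end{pmatrix}\begin{pmatrix} a_k \\ b_k \end{pmatrix} + \begin{pmatrix} h_k \\ \ii k\cdot g_k \end{pmatrix},
\]
where I used $\partial_t b_k = \ii k \cdot \partial_t u_k = \ii k\cdot(\nu \Delta u - \nabla K a + g)_k = -\nu |k|^2 b_k - |k|^2 \cdot |k|^{-2} (\text{hmm}) $; more precisely $\ii k \cdot \nabla(Ka)_k = -|k|^2 (Ka)_k = -a_k$. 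The eigenvalues are $\lambda_\pm(k) = \tfrac12(-\nu|k|^2 \pm \sqrt{\nu^2|k|^4 - 4})$; for $|k|$ large both have real part $\sim -\nu|k|^2$ (one like $-1/(\nu|k|^2)$, the other like $-\nu|k|^2$), which is the source of the non-uniform spectral gap mentioned after \eqref{real_parts_lambda_k_s} but is still enough: on each dyadic block $|k|\sim 2^m$ the semigroup $\ee^{tL_k}$ satisfies $|\ee^{tL_k}| \lesssim \ee^{-ct/(\nu 2^{2m})}$ (controlling $a_k$ via the slow mode) while the combination $b_k$ and the difference $a_k - \{a\}$ (i.e. dropping $k=0$) decay like $\ee^{-c\nu t 2^{2m}}$.

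The key analytic step is to promote these pointwise-in-$k$ decay estimates to $L^p$-bounds on Littlewood--Paley blocks via the multiplier inequality \eqref{multi_on_torus}, exactly as was done in the derivation of \eqref{max_reg} and \eqref{each_mode_exp}: for each dyadic $m\ge 1$ the entries of $\ee^{tL_k}\phi_{m\pm1}(k)$ define a Fourier multiplier whose $H^N$ norm (after the $d_\Lambda \sim 2^m$ rescaling) is bounded by the appropriate exponential, because $L_k$ and its derivatives in $k$ are polynomially controlled on the annulus $|k|\sim 2^m$. Duhamel's formula then gives, for the "solution operator" piece,
\[
\| P_m a(t)\|_p \lesssim \ee^{-ct/(\nu 2^{2m})}\| P_m a_0\|_p + 2^{m}\,\ee^{-ct/(\nu 2^{2m})}\| P_m u_0\|_p + \int_0^t (\cdots)\,\d s ,
\]
and similarly for $\| P_m u(t)\|_p$, $\| P_m\div u(t)\|_p$, $\| P_m\partial_t a\|_p$, etc. Multiplying by $2^{(s+1)m}$ (resp. $2^{sm}$, $2^{(s-1)m}$, $2^{(s+2)m}$ as appropriate for the various norms in \eqref{lin_ests}), integrating in $t$ using Young's convolution inequality $\|f*g\|_1 \le \|f\|_1\|g\|_1$ on the $L^1$-norm terms, and summing in $m$ gives \eqref{lin_ests}; the $L^\infty$-in-time bounds come from taking the sup in $t$ before summing, and the terms $\| a-\{a\}\|_{L^1 B^{s-1}}$ and $\|u-\{u\}\|_{L^1 B^{s+2}}$ only use blocks $m\ge 1$, so only the fast eigenvalue $\sim \nu 2^{2m}$ enters there, which is what produces the two extra derivatives (resp. removes two). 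The equation \eqref{lin_sys} itself then recovers $\partial_t u$ and $\partial_t a$ from $\Delta u$, $\nabla K a$, $\div u$, and $g,h$, via \eqref{der_in_out}.

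Uniqueness is immediate from the mode-by-mode ODE representation (a linear ODE with a source has a unique solution in $L^1_t$), and existence follows by checking that the series built from \eqref{par_eq_sol}-type formulas converges in the claimed spaces thanks to the estimates just established. The main obstacle I expect is bookkeeping the two different decay rates of $\lambda_\pm(k)$ simultaneously: one must be careful that the slowly-decaying mode (real part $\sim -1/(\nu|k|^2)$, so essentially no smoothing at high frequency) is only ever paired with the $B^{s+1}$-norm of $a$ and never asked to gain derivatives, whereas the two-derivative gain in $\| u-\{u\}\|_{L^1 B^{s+2}}$ and the loss in $\| a-\{a\}\|_{L^1 B^{s-1}}$ must be extracted from the fast mode or from the algebraic structure $b_k = \ii k\cdot u_k$ (one factor of $|k|$) together with $a_k = -|k|^2 (Ka)_k$. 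Writing the solution operator as an explicit $3\times 3$ matrix (for $(a_k, b_k, u_k^\perp)$) with entries that are rational functions of $|k|^2$ times $\ee^{t\lambda_\pm}$, and tracking the homogeneity in $|k|$ of each entry, makes this transparent and reduces everything to repeated application of \eqref{multi_on_torus} as in \eqref{mult_temp}.
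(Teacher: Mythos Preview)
Your approach is essentially the paper's: mode-by-mode Fourier analysis, the same characteristic roots $\lambda_k^\pm$ of \eqref{lambda_roots}, promotion to $L^p$ via the multiplier estimate \eqref{multi_on_torus}/\eqref{each_mode_exp}, and Young's inequality in $L^1_t$. The paper organises the bookkeeping slightly differently --- it first subtracts off a heat solution to reduce to $g=0,\ u_0=0$, then passes to a second-order scalar ODE for $d=\div u$ (your $b$), and finally recovers $u$ from $u_t-\nu\Delta u=-\nabla K a$ via maximal regularity \eqref{max_reg} --- but your divergence-free/gradient splitting of $u$ and direct treatment of the $2\times 2$ system is an equivalent route.

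One statement is wrong and would derail the argument if you relied on it: the claim that ``the terms $\|a-\{a\}\|_{L^1 B^{s-1}}$ and $\|u-\{u\}\|_{L^1 B^{s+2}}$ only use blocks $m\ge 1$, so only the fast eigenvalue $\sim\nu\,2^{2m}$ enters there'' is false. Restricting to $m\ge 1$ does not remove the slow mode; for every $k\ne 0$ both $a_k$ and $b_k$ contain a component decaying like $\ee^{t\lambda_k^+}\sim\ee^{-ct|k|^{-2}}$. It is precisely the \emph{slow} eigenvalue that sets the derivative count: integrating $\ee^{t\lambda_k^+}$ over $t\in(0,\infty)$ produces a factor $|\lambda_k^+|^{-1}\sim|k|^2$, and this is why $\|a-\{a\}\|_{L^1 B^{s-1}}$ (two derivatives \emph{below} $a_0\in B^{s+1}$ and $h\in L^1B^{s+1}$) is the best one can hope for --- no smoothing occurs here. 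The paper makes this transparent via $a-\{a\}=d_t-\nu\Delta d$ together with the estimate $\|d\|_{L^1 B^{s+1}}\lesssim\|a_0\|_{B^{s+1}}+\|h\|_{L^1 B^{s+1}}$, where the slow exponential in \eqref{d_est1} is visibly the bottleneck. Your final paragraph gestures at the correct picture (``the slowly-decaying mode \ldots\ is only ever paired with the $B^{s+1}$-norm of $a$''); make sure the body of your argument actually implements that, and drop the contradictory sentence.
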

The lemma can be proved by first taking $\div$ of the second equation to obtain an evolution equation for $d\coloneqq \div u$. Taking $\p_t$ of the resulting PDE and substituting $a_t$ from the first equation we obtain an autonomous PDE on $d$, which we can solve by translating it into a family of second order ODEs for the Fourier coefficients of $d$. This allows us to find $a$ from the first equation. We can then use it to find $u$ from the second equation. 
\begin{proof}
We first note that we can assume that $g=0$ and $u_0=0$. Indeed, otherwise, we denote by $\widetilde{u}$ the solution of the heat equation with initial data $u_0$ and forcing $g$, i.e. we set
\begin{equation}\label{eq:u-tilde}
\widetilde{u}_k (t)  \coloneqq \int_0^t g_k \ee^{-\nu k^2 (t-s)} \d s + \ee^{-\nu k^2 t} u_{0k}. 
\end{equation}
Maximal regularity \eqref{max_reg} gives that
\eqnb\label{max_reg_tilde}
\|\widetilde{u}_t \|_{L^1 B^s_{p,1}} + \|  \Delta \widetilde{u} \|_{L^1 B^{s}_{p,1}} \lec \| g \|_{L^1 B^{s}_{p,1}}
+\|u_0\|_{B^s_{p,1}}.
\eqne
Then $(a,u-\widetilde{u})$ satisfies \eqref{lin_sys} with $g=0$, $(u-\widetilde{u})_{t=0} =0$ and the right-hand side of the equation for $a$ equal 
\eqnb\label{new_h}
\widetilde{h} \coloneqq h- \div \, \widetilde{u}.
\eqne
Note that 
\[
\| \widetilde{h} \|_{L^1 B^s_{p,1}} \lec \| {h} \|_{L^1 B^s_{p,1}} + \| \Delta \widetilde{u} \|_{L^1 B^{s-1}_{p,1}}  \lec \| {h} \|_{L^1 B^s_{p,1}} + \| g \|_{L^1 B^{s-1}_{p,1}}  + \| u_0 \|_{B^{s-1}_{p,1}}  
\]
for all $s$. Thus, if the lemma is valid in the homogeneous case $g=0$, $u_0=0$, then it is also valid in the inhomogeneous case. We can thus assume that $g=0$ and $u_0=0$.\\

Taking $\div $ of the second equation of \eqref{lin_sys} and setting $d\coloneqq \div \, u$ we obtain 
\eqnb\label{eq_for_a}
d_{t} - \nu \Delta d -(a-\{ a \} ) = 0 ,
\eqne
where we also used the fact that $\Delta (K a )=-(a-\{ a \} )$. Taking $\p_t$ and recalling that $a_t = h-d$ we obtain
\[
d_{tt} - \nu \Delta d_t +d  = -(h-\{ h \} )  
\]
with initial data $d|_{t=0}=0$ and from (\ref{eq_for_a})
$d_t|_{t=0}=a_0$. 

In terms of Fourier coefficients we obtain a $2$nd order ODE
\eqnb\label{eq_for_dk_fourier}
\p_{tt} d_k + \nu k^2 \p_t d_k + d_k = - h_k
\eqne
for $k\ne 0$. (Note that $d_0=\int \div u =0$.) The roots of the characteristic polynomial $\lambda^2 + \nu k^2 \lambda  + 1$ are 
\eqnb\label{lambda_roots}
\lambda^\pm_k = (-\nu k^2 \pm \sqrt{\nu^2 k^4 -4} )/2,
\eqne
where $k^2 \coloneqq k_1^2 + k_2^2 +\ldots + k_d^2$. Let us first assume that $\nu^2 k^4 \ne 4$ for all $k\in \Z^d\setminus \{ 0 \}$. Then there exists $C_\nu>0$ such that
\eqnb\label{lambdas_separated}
|\lambda^+_k-\lambda^-_k | \geq C_\nu  k^2 \qquad \text{ for } k\in \Z^3 \setminus \{ 0 \}.
\eqne
Note also that $\re\, \lambda_k^+, \re \,\lambda_k^- <0$ with 
\eqnb\label{real_parts_lambda_k_s}
|\lambda_k^+ | \sim |\re \, \lambda_k^+| \sim C_\nu k^{-2}, \qquad  |\lambda_k^- | \sim |\re\, \lambda_k^- |   \sim {C_\nu }k^2 
\eqne
for $k\ne 0$. We note that the above behavior of the roots $\lambda_k^+, \lambda_k^-$ determines the proprieties of the spectrum of the operator coming from the linear system (\ref{lin_sys}). In particular, as mentioned in the introduction, we emphasize that although the $d$-dimensional torus $\T^d$ is bounded, the spectrum is not separated from zero, which exclude possibility of the exponential decay of solutions.

We can now write the explicit form of the solution,
\begin{equation}\label{eq:d}
d_k (t) = A_k \ee^{\lambda^+_k t } + B_k \ee^{\lambda_k^- t} +\frac{1}{\lambda_k^+-\lambda_k^-}\int_0^t h_k (s) \left(\ee^{\lambda_k^+ (t-s)} - \ee^{\lambda_k^- (t-s)}  \right)  \d s,
\end{equation}
where $A_k, B_k \in \RR$ are such that
\[
\begin{pmatrix}
1&1 \\
\lambda_k^+ &\lambda_k^-
\end{pmatrix}
\begin{pmatrix}
A_k \\ B_k
\end{pmatrix}=\begin{pmatrix}
0\\
a_{0k}
\end{pmatrix}
\]
In particular, \eqref{lambdas_separated} gives that
\eqnb\label{ak_bk_bounds}
|A_k | , |B_k | \leq  C_\nu k^{-2} |a_{0k}|
%\frac{C_\nu}{k^2} (k^{-1} a_{0,k} + (k+k^3 ) u_{0,k} ) 
\eqne
If $\nu k^2 =2 $ for some $k\in \Z^d\setminus \{ 0\}$ then
\begin{equation*}
d_k (t) = A_k \ee^{-t } + B_k t \ee^{- t} +\int_0^t \int_0^s  h_k (\tau )  \ee^{-(t-\tau ) } \d \tau\,  \d s
\end{equation*}
for such $k$, where
$A_k \coloneqq 0,
B_k \coloneqq  a_{0k}$.
In particular \eqref{ak_bk_bounds} follows in this case as well.

Thus considering the modes $k\sim 2^m$ we can use \eqref{each_mode_exp} to obtain  
\eqnb\label{d_est1}\begin{split}
\int_0^\infty \| P_m d (t) \|_p \d t &\lec_\nu 2^{-2m} \| P_m a_0 \|_p  \int_0^\infty  \left( \ee^{-c2^{-2m} t} + \ee^{-c2^{2m}  t} \right)\d t \\
&+ 2^{-2m} \left\| \| P_m  h \|_p \ast \ee^{-c2^{-2m} t } +  \| P_m  h \|_p \ast \ee^{-c2^{2m} t } \right\|_{L^1_t}\\
&\lec  \| P_m a_0 \|_p   + \int_0^\infty  \| P_m  h (t)  \|_p \d t 
\end{split}
\eqne
for $m\geq 1$, which $\| P_0 d(t) \|_p \sim d_0 (t) =0$ for all $t>0$. 

Similarly, 
\[
d_k' (t) = A_k\lambda_k^+ \ee^{\lambda^+_k t } + B_k\lambda_k^- \ee^{\lambda_k^- t} +\frac{1}{\lambda_k^+-\lambda_k^-}\int_0^t h(s) \left(\lambda_k^+ \ee^{\lambda_k^+ (t-s)} - \lambda_k^- \ee^{\lambda_k^- (t-s)}  \right)  \d s
\]
for every $k\in \Z^3 \setminus \{ 0 \}$, and, analogously to \eqref{d_est1}, \eqref{each_mode_exp} gives that ($k\sim 2^m$)
\[\begin{split}
\int_0^\infty \| P_m d_t (t) \|_p \d t &\lec 2^{-2m} \| P_m a_0 \|_p +  2^{-2m}  \int_0^\infty  \| P_m  h (t)  \|_p \d t 
\end{split}
\]
for $m\geq 1$. This and \eqref{d_est1} implies that
\eqnb\label{d_ests}
\begin{split}
\| d \|_{L^1 B_{p,1}^s } &\lec_{\nu} \| a_0 \|_{B_{p,1}^{s}}  + \| h \|_{L^1 B^s_{p,1}}\\
\| d \|_{L^\infty B_{p,1}^s }+\| d_t \|_{L^1 B_{p,1}^s } &\lec_{\nu} \| a_0 \|_{B_{p,1}^{s-2}}+ \| h \|_{L^1 B^{s-2}_{p,1}}
\end{split}
\eqne
Moreover, using \eqref{eq_for_a} we see that $a-\{ a \} =d_t - \Delta d$, which implies that 
\eqnb\label{a_est1}
\| a -\{ a \} \|_{L^1 B^s_{p,1} } \lec_{\nu }\| d_t \|_{L^1 B^s_{p,1} } +\| d \|_{L^1 B^{s+2}_{p,1} } \lec_{ \nu} \| a_0 \|_{B_{p,1}^{s+2}}  + \| h \|_{L^1 B^{s+2}_{p,1}},
\eqne
where we applied \eqref{embeddings} to write $\| d_t \|_{L^1 B^s_{p,1} } \lec  \| d_t \|_{L^1 B^{s+4}_{p,1} } $, and used \eqref{d_ests}. On the other hand, $a_t = -d + h$, which gives that
\eqnb\label{a_est2}
\| a \|_{L^\infty B^s_{p,1}} + \| a_t \|_{L^1 B^s_{p,1} } \lec \| a_0 \|_{B_{p,1}^{s}} + \| d \|_{L^1 B^s_{p,1} } +\| h \|_{L^1 B^{s}_{p,1} } \lec \| a_0 \|_{B_{p,1}^{s}}  + \| h \|_{L^1 B^s_{p,1}}.
\eqne
Moreover, recalling that $f\mapsto \nabla K  f $ is an operator of order $-1$, we can use maximal regularity \eqref{max_reg} of the second equation of \eqref{lin_sys},
\begin{equation}\label{eq:uu}
u_t - \Delta u = -\nabla K a 
\end{equation}
to obtain
\[
\|u \|_{L^\infty B^s_{p,1}} + \|u_t\|_{L^1 B^s_{p,1}} + \|\Delta u\|_{L^1 B^{s}_{p,1}}  \lec_\nu \| a \|_{L^1 B^{s-1}_{p,1}} \lec_\nu \| a_0 \|_{B_{p,1}^{s+1}} + \| h \|_{L^1 B^{s+1}_{p,1}},
\]
where we used \eqref{a_est1} in the last inequality. This, \eqref{a_est1} and \eqref{a_est2} give \eqref{lin_ests}, as required. 

The estimates \eqref{lin_ests} prove uniqueness of solutions. As for existence, we first define $d$ by \eqref{eq:d} and then we set $a\coloneqq \{ a_0 \} +\int_0^t \{ h(s) \} \d s + d_t-\Delta d$. 
\end{proof}

We can now prove Proposition~\ref{prop_main} 

\begin{proof}[Proof of Proposition~\ref{prop_main}.]
We rewrite \eqref{eqs_lagrangian1} in the form
\eqnb\label{eq_lagr}
\begin{split}
a_t + \div \, u &= -a\div u+(1+a)(\div -\div_u ) u \\
&=: h\\
u_t -\Delta u + \na (-\Delta )^{-1} a &= a \na (-\Delta )^{-1} a-au_t +  (\Delta_u - \Delta )u \\
& \qquad +(1+a)\left( \na (-\Delta )^{-1} - \na_u (-\Delta_u )^{-1} \right) a\\
&=: g,
\end{split}
\eqne
and note that Lemma~\ref{lem_lin_system} gives that 
\begin{multline}\label{cons_of_lem}
\| a \|_{L^\infty \BB } + \| a_t \|_{L^1 \BB } + \| a-\{ a \} \|_{L^1 \UB } + \| u \|_{L^\infty \B}  +  \| u_t \|_{L^1 \B } +  \| u-\{ u \} \|_{L^1 \BBB }
\\
\lec \| a_0 \|_{\BB }+ \| u_0 \|_{\B }+  \| h \|_{L^1 \BB } + \| g \|_{L^1 \B }.
\end{multline}
Assuming that 
\eqnb\label{smallness_u_L1B2}
\| u - \{ u \} \|_{L^1 \BBB } \leq \gamma,
\eqne
where $\gamma \in (0,1)$ is a sufficiently small constant, we show in Step 1 below that
\eqnb\label{for_step1}
\| h \|_{L^1 \BB } \lec \| a \|_{L^\infty \BB } \| u - \{ u \} \|_{L^1 \BB }+\left( 1+ \| a \|_{L^\infty  \BB  } \right) \| u -\{ u \} \|_{L^1 \BBB  }^2
\eqne
and in Step 2 that
\begin{multline} \label{for_step2}
\| g \|_{L^1 \B } \lec \| a \|_{L^\infty \BB  } \left( \|a -\{ a \} \|_{L^1 \UB }  + \| u_t \|_{L^1 \B  } \right) + \| u - \{ u\} \|_{L^1 \BBB }^2
\\
+ \left( 1+ \| a \|_{L^\infty \BB  }\right) \| u-\{ u \}  \|_{L^1 \BBB  } \| a- \{ a \} \|_{L^1 \UB }
\end{multline}
Thanks to these estimates we can use \eqref{cons_of_lem} to obtain the a~priori bound 
\eqnb\label{apriori}\begin{split}
\| a& \|_{L^\infty \BB }  + \| a_t \|_{L^1 \BB } + \| a - \{ a \}  \|_{L^1 \UB } + \| u \|_{L^\infty \B }  +  \| u_t \|_{L^1 \B }+  \| u -\{ u \} \|_{L^1 \BBB } \\
&\lec \| a_0 \|_{\BB }+ \| u_0 \|_{\B }+  \| h \|_{L^1 \BB } + \| g \|_{L^1 \B }\\
&\lec \| a_0 \|_{\BB }+ \| u_0 \|_{\B } \\
&\hspace{1cm}+ \| a \|_{L^\infty \BB  } \left( \|a-\{ a \}  \|_{L^1 \UB }  + \| u_t \|_{L^1 \B  } + \| u-\{ u \} \|_{L^1 \BBB  } \right) \\
&\hspace{1cm}+\left( 1+ \| a \|_{L^\infty \BB  }\right) \| u -\{ u \} \|_{L^1 \BBB  } \left( \| a- \{ a \} \|_{L^1 \UB } + \| u - \{ u \} \|_{L^1 \BBB  } \right).
\end{split}
\eqne

Note that the right-hand sides of \eqref{for_step1}, \eqref{for_step2} are at least quadratic in $(a,u)$. This allows us to use the a~priori bound to prove claim using Banach Contraction Theorem, which we discuss in Step 3 for the sake of completeness.\\

\noindent\texttt{Step 1.} We prove \eqref{for_step1}.\\

We note that
\eqnb\label{step1a}
\| a \div \, u \|_{L^1 \BB } \lec \| a \|_{L^\infty \BB } \| u -\{ u \} \|_{L^1 \BBB }.
\eqne
as for the other ingredient of $h$, we first use \eqref{def_A} to expand $A$ as the Neumann series
\eqnb\label{neumann}
A-I=\sum_{k\geq 1} \left(-\int_0^t \na u \right)^k,
\eqne
where $I$ denotes the $d\times d$ identity matrix. Taking the $L^\infty \BB $ norm we see that
\eqnb\label{smallness_I_A}
\| A- I \|_{L^\infty \BB  } \leq \sum_{k\geq 1}\left\| \int_0^t \na u \right\|_{L^\infty \BB}^k \leq \sum_{k\geq 1}\left\|  u -\{ u \} \right\|_{L^1 \BBB }^k \leq 2 \| u - \{ u \} \|_{L^1 \BBB } \leq 2\gamma, 
\eqne
provided $\gamma <1/2$, where we used \eqref{smallness_u_L1B2}.
We note that, since $\na = \left( \frac{\d X }{\d y}\right)^T \na_u$ we have $\na_u =A^T \na $. Thus
\eqnb\label{div_euler}
(\div - \div_u ) u =  (\delta_{ij} - A_{ji} ) \p_j u_i ,
\eqne
and consequently
\eqnb\label{step1b}\begin{split}
\| (1+a) (\div- \div_u )u \|_{L^1 \BB  } &\lec \left( 1+ \| a \|_{L^\infty \BB  } \right) \| (\delta_{ij}- A_{ji} ) \p_j u_i \|_{L^1 \BB  } \\
&\lec \left( 1+ \| a \|_{L^\infty \BB  } \right) \| I-A \|_{L^\infty \BB  } \| u -\{ u \} \|_{L^1 \BBB  }  \\
&\lec  \left( 1+ \| a \|_{L^\infty \BB  } \right) \| u - \{ u \}  \|_{L^1 \BBB  }^2,
\end{split}
\eqne
as required, where we used \eqref{smallness_I_A} in the last step.\\

\noindent\texttt{Step 2.} We prove \eqref{for_step2}.\\

As for the first two ingredients of $g$ we obtain
\eqnb\label{g_first_2}
\| a \na (-\Delta )^{-1} a -au_t\|_{L^1 \B } \lec \| a \|_{L^\infty \BB  } \left( \|a -\{ a \} \|_{L^1 \UB }  + \| u_t \|_{L^1 \B } \right),
\eqne
as required.

As for the remaining two ingredients we first note that
\eqnb\label{Delta_comm_pre} \begin{split}
\Delta_u -\Delta &= \div_u \na_u  - \div \na = \div (\na_u -\na ) + (\div_u - \div )\na_u  \\
&=\p_j ((A_{ij} -\delta_{ij} ) \p_i)+ (A_{ji}-\delta_{ij})\p_j (\p_{i}+ (\delta_{ik}-A_{ki})\p_k )
\end{split}\eqne
which gives that 
\eqnb\label{Delta_commutator}\begin{split}
\| (\Delta_u - \Delta )u \|_{B^s} &\leq \| (A^T-I) \na u \|_{B^{s+1}} + \| A-I \|_{\BB  } \| \na u + (I-A^T )\na u \|_{B^{s+1}}\\
&\lec \| A-I \|_{\BB } \left( 1+ \| A-I \|_{\BB } \right) \| u -\{ u \} \|_{B^{s+2}}
\end{split}
\eqne
for each time and $s=d/p-2,d/p-1 $, where we used \eqref{algebra_prop1} in the first line. We note in passing that \eqref{Delta_commutator} is the main reason for our restriction on the range of $p$, due to the order restriction in the product law \eqref{algebra_prop1}.
Taking $s=d/p-1$ we can estimate the third ingredient of $g$,
\eqnb\label{Delta_comm_conseq}
\begin{split}
\| (\Delta_u - \Delta )u \|_{L^1 \B } &\leq  \| A-I \|_{L^\infty \BB } \left( 1+ \| A-I \|_{L^\infty \BB } \right) \| u- \{ u \}  \|_{L^1 \BBB } \\
&\lec \| u - \{ u \}  \|_{L^1 \BBB }^2,
\end{split}
\eqne
where we used \eqref{smallness_I_A} and \eqref{smallness_u_L1B2}.

On the other hand, taking $s=d/p-2$ in \eqref{Delta_commutator} gives an elliptic estimate
\eqnb\label{ellip_est}
\| (-\Delta_u )^{-1} w \|_{\BB } \lec \| w \|_{\UB }
\eqne
for $w\in \UB $ with $\int w =0$. Indeed letting $f\coloneqq (-\Delta_u )^{-1} w$ we see that $f$ satisfies the Poisson equation
\[
-\Delta f = -\Delta_u  f + (\Delta_u - \Delta ) f = w +(\Delta_u - \Delta ) f 
\]
on the torus, which, after noting that $w=(-\Delta ) (-\Delta )^{-1} w$, gives
\[
-\Delta ((-\Delta_u )^{-1}-(-\Delta )^{-1})w = (\Delta_u -\Delta ) f.
\]
Taking the $\UB $ norm gives
\eqnb\label{diff_of_inv_Lapl}
\| \left( (-\Delta_u )^{-1} - (-\Delta )^{-1} \right)  w \|_{\BB } \lec \| (\Delta_u -\Delta ) f \|_{\UB } \lec \| A-I \|_{\BB } \left( 1+ \| A-I \|_{\BB } \right) \| f \|_{\BB },
\eqne
where we used \eqref{Delta_commutator} with $s=d/p-2$ in the second inequality. In particular 
\[\begin{split}
\| f \|_{\BB } &\lec \| w \|_{\UB } + \| (-\Delta_u )^{-1}w - (-\Delta )^{-1} w \|_{\BB }  \lec  \| w\|_{\UB } + \gamma  \|f \|_{\BB }
\end{split}
\]
where we used \eqref{smallness_I_A} in the last inequality. The elliptic estimate \eqref{ellip_est} follows if $\gamma $ is chosen sufficiently large so that the last term can be absorbed by the left-hand side. 

The last ingredient of $g$ can now be estimated by noting the identity
\[
\na (-\Delta )^{-1} - \na_u (-\Delta_u )^{-1} = \na ((- \Delta )^{-1}-(-\Delta_u )^{-1} )  -  (\na_u -\na ) (-\Delta_u )^{-1} ,
\]
which gives that
\eqnb\label{cubic_term}
\begin{split}
\| &(1+a) \left( \na (-\Delta )^{-1} - \na_u (-\Delta_u )^{-1} \right) a \|_{L^1 \B } \\
&\lec \left( 1+ \| a \|_{L^\infty \BB  }\right) \left( \| \left( (-\Delta )^{-1} - (-\Delta_u )^{-1}\right) ( a-\{ a \} )\|_{L^1 \BB } \right. \\
&\left.\hspace{6cm}+ \| (\na_u -\na)(-\Delta_u )^{-1} (a-\{ a \} ) \|_{L^1 \B  } \right) \\
&\lec \left( 1+ \| a \|_{L^\infty \BB }\right) \| I-A \|_{L^\infty \BB  } \left( 1+\| A-I \|_{L^\infty \BB } \right) \| (-\Delta_u )^{-1} (a- \{ a \} ) \|_{L^1 \BB }  \\
&\lec  \left( 1+ \| a \|_{L^\infty \BB  }\right) \| u -\{ u \} \|_{L^1 \BBB  } \| a- \{ a \} \|_{L^1 \UB },  
\end{split}
\eqne
as required, where we used \eqref{diff_of_inv_Lapl}, the product rule \eqref{algebra_prop1} and the fact that $\na_u - \na = (A^T-I)\na $ in the second inequality, as well as \eqref{smallness_I_A} in the last line.\\

\noindent\texttt{Step 3.} We prove the claim.\\

We set 
\[
\| (u,a) \| \coloneqq   \| a \|_{L^\infty \BB } + \| a_t \|_{L^1 \BB  }  + \| a - \{ a \} \|_{L^1 \UB  }
\]
\[+ \| u \|_{L^\infty \B } + \| u_t \|_{L^1 \B } + \| u -\{ u \} \|_{L^1 \BBB }  
\]
and
\[
V\coloneqq \{ (u,a ) \colon \| (u,a) \| < \infty \}.  
\]
Then $V$ equipped with the norm $\| \cdot \|$ is a Banach space. Given $(\vv,\aa ) \in V$, we let $S(\vv , \aa )$ denote the solution of the linear system \eqref{lin_sys} with 
\[
\begin{split}
h &\coloneqq -\aa \div \vv +(1+\aa )(\div -\div_{\vv } ) \vv  \\
g &\coloneqq  \aa  \na (-\Delta )^{-1} \aa - \aa \vv_t +  (\Delta_{\vv } - \Delta )\vv  +(1+\aa )\left( \na (-\Delta )^{-1} - \na_{\vv } (-\Delta_{\vv } )^{-1} \right) \aa
\end{split}
\]
By \eqref{apriori} we see that $S\colon V\to V$. By Lemma~\ref{lem_lin_system} there exists $C>0$ such that $\| S(0,0)  \| \leq {C} (\| a_0 \|_{\BB } + \| u_0 \|_{\B } )/2 \leq {C\epsilon }/2$. By the a~priori estimate \eqref{apriori} we obtain that $\| S(\vv , \aa ) - S(0,0) \| \lec \| (\vv , \aa ) \|^2 \leq C^2 \epsilon^2$ for every $(\vv , \aa ) \in B\coloneqq B(0,C\epsilon )$. Thus $S$ maps $B$ into itself for sufficiently small $\epsilon >0$, as $\| S(\vv , \aa ) \| \leq \frac{C\epsilon }2 + \| S(\vv , \aa ) - S(0,0) \| \leq C \epsilon$ for $(\vv, \aa)\in B$. We show below that $S$ is a contraction on $B$, namely that
\eqnb\label{contr}
\| S(\vv , \aa ) - S(\ww , \bb ) \| \leq \frac12 \| (\vv - \ww ,\aa-   \bb ) \| =: \frac12 d
\eqne
for all $(\vv , \aa ), (\ww , \bb )\in B$, given $\epsilon >0$ is chosen sufficiently small. Banach Contraction Theorem then gives the claimed existence and uniqueness result.\\

Letting $(u,a) \coloneqq S(\vv , \aa ) - S(\ww , \bb )$ we see that $(u,a)$ is a solution to the problem
\eqnb\label{lin_sys_ua}
 \begin{array}{lcr }
  a_t + \div u = \delta h & \mbox{ in } & \T^3 \times \R_+,\\[7pt]
  u_t -\nu \Delta u + \nabla (K  a ) =\delta g &
  \mbox{ in } & \T^3 \times \R_+
 \end{array}
\eqne
with homogeneous initial conditions $a(0)=0$, $u(0)=0$, where
\[\begin{split}
    \delta h&\coloneqq -\aa \div \vv +(1+\aa )(\div -\div_{\vv } ) \vv  -\bb \div \ww +(1+\bb )(\div -\div_{\ww } ) \ww \\
    &=- (\aa - \bb )  \div \vv  - \ww \div (\vv - \ww )
    +(\aa - \bb ) ( \div -\div_{\vv } ) \vv \\
    &\hspace{5cm}+    (1+\bb ) \left( (\div -\div_{\vv } ) \vv -(\div -\div_{\ww } ) \ww\right)\\
    &= - (\aa - \bb )  \div \vv  - \bb \div (\vv - \ww ) +(\aa - \bb )  (\div -\div_{\vv} ) \vv\\
    &\hspace{5cm} +   (1+\bb ) \left((\div -\div_{\ww } ) (\vv-\ww )\right)+(1+\bb ) \left( (\div_{\ww } -\div_{\vv } ) \vv \right) 
\end{split}
\]
and
\[
 \begin{split}
    \delta g&\coloneqq 
      \aa  \na (-\Delta )^{-1} \aa -\aa  \vv_t +  (\Delta_{\vv } - \Delta )\vv  
           +(1+\aa )\left( \na (-\Delta )^{-1} - \na_{\vv } (-\Delta_{\vv } )^{-1} \right) \aa \\
     &\hspace{0.5cm}-\big( \bb  \na (-\Delta )^{-1} \bb -\bb \ww_t +  (\Delta_{\ww } - \Delta ) \ww 
       +(1+\bb )\left( \na (-\Delta )^{-1} - \na_{\ww } (-\Delta_{\ww } )^{-1} \right) \bb  \big)\\
     &= \underbrace{ (\aa - \bb ) \na (-\Delta )^{-1} \aa + \bb \na (-\Delta )^{-1} (\aa - \bb )}_{=: \delta g_1} 
\underbrace{       - (\aa - \bb ) \vv_t  -\bb (\vv-\ww )_t }_{=: \delta g_2}
       \\
      &\hspace{0.5cm}+\underbrace{(\Delta_\vv - \Delta )(\vv - \ww )}_{=: \delta g_3} +\underbrace{ (\Delta_\vv -\Delta_\ww )\ww}_{=: \delta g_4} \\\
      &\hspace{0.5cm} + \underbrace{(\aa - \bb ) \left( \na (-\Delta )^{-1} - \na_{\vv } (-\Delta_{\vv } )^{-1} \right) \aa }_{=: \delta g_5} +\underbrace{(1+\bb )\left(  \na (-\Delta )^{-1} - \na_{\vv } (-\Delta_{\vv } )^{-1} \right) (\aa - \bb )}_{=: \delta g_6} \\
      &\hspace{0.5cm} + \underbrace{ (1+\bb ) \left( \na_\ww (-\Delta_\ww )^{-1} - \na_{\vv } (-\Delta_{\vv } )^{-1} \right) \bb }_{=: \delta g_7}
\end{split}
\]
In the remainder of the proof we verify that
\[
\| \delta h \|_{L^1 \BB } + \| \delta g \|_{L^1 \B } \lec \epsilon d 
\]
whenever $(\vv , \aa ) , (\ww, \bb) \in B$.  This and Lemma~\ref{lem_lin_system} proves the required contraction property \eqref{contr} if $\epsilon >0$ is chosen sufficiently small.\\

Looking at the structure of $\delta h$ we see that the first four terms can be bounded in $\| \cdot \|_{L^1 \BB }$ in the same way as in Step 1 above (recall \eqref{step1a} and \eqref{step1b}), to give the upper bound
\[\begin{split}
\| \aa - \bb \|_{L^\infty \BB  }&\| \vv - \{ \vv \} \|_{L^1 \BBB  }+\|  \bb \|_{L^\infty \BB  }\| \vv -\ww - \{ \vv - \ww \} \|_{L^1 \BBB  }  \\
&+\|\aa - \bb \|_{L^\infty \BB  } \| I-\overline{A} \|_{L^\infty \BB }  \| \vv - \{ \vv \} \|_{L^1 \BBB  } \\
&+ \left( 1+ \| \bb \|_{L^\infty \BB  }   \right) \| I-\overline{\overline{A}} \|_{L^\infty \BB }  \| \vv - \ww -\{ \vv - \ww \}\|_{L^1 \BBB  }\\
&\hspace{3cm} \lec \epsilon \left( \| \aa - \bb \|_{L^\infty \BB  } + \| \vv - \ww -\{ \vv - \ww \}\|_{L^1 \BBB  } \right)  \lec \epsilon d,
\end{split}
\]
where
\[
{\overline{A}} \coloneqq I + \sum_{k\geq 1} \left( - \int_0^t \na \vv \right)^k\qquad \text{ and }\qquad \overline{\overline{A}} \coloneqq I + \sum_{k\geq 1} \left( - \int_0^t \na \ww \right)^k,
\]
recall the Neumann expansion \eqref{neumann}.

As for the last ingredient of $\delta h$ we recall the Neumann series \eqref{neumann} and the algebraic identity $a^k - b^k = (a-b) \sum_{m=0}^{k-1} a^{k-m}b^m$ to write 
\[\begin{split}
\overline{\overline{A}} - \overline{A} & = \sum_{k\geq 1} \left( \left(-\int_0^t \na \ww  \right)^k -\left(-\int_0^t \na \vv  \right)^k  \right)\\
&= \left(-\int_0^t \na (\ww - \vv )  \right) \sum_{k\geq 1} \sum_{m=0}^{k-1}  \left(-\int_0^t \na \ww  \right)^{k-m} \left(-\int_0^t \na \vv  \right)^m,
\end{split}\]
 Thus, taking the $L^\infty \BB $ norm gives
\eqnb\label{A-A} \begin{split}
\| \overline{\overline{A}} - \overline{A} \|_{L^\infty \BB  } &\leq \| \vv-\ww -\{ \vv - \ww \}\|_{L^1 \BBB } \sum_{k\geq 1} \sum_{m=0}^{k-1} \| \ww - \{ w\} \|_{L^1 \BBB  }^{k-1-m} \| \vv - \{ \vv \} \|_{L^1 \BBB  }^m 
\\
& \lec \| \vv-\ww -\{ \vv - \ww \} \|_{L^1 \BBB  } \sum_{k\geq 1 } \sum_{m=0}^{k-1} \epsilon^{k-1-m} \epsilon^m  \\
&\lec \| \vv-\ww -\{ \vv - \ww \} \|_{L^1 \BBB  } \sum_{k\geq 1 } k \epsilon^{k-1} \lec  \| \vv-\ww -\{ \vv - \ww \} \|_{L^1 \BBB  }.
\end{split}
\eqne
Hence, recalling \eqref{div_euler}, we obtain
\eqnb\label{lms_deltah}
\begin{split}
    \| (1+\bb ) \left( \div_{\ww } -\div_{\vv } ) \vv \right)\|_{L^1 \BB } &\leq  \left( 1+ \| \bb \|_{L^\infty \BB  } \right)  \| A_\ww - A_\vv \|_{L^\infty \BB } \| \vv -\{ \vv \} \|_{L^1 \BBB  }  \\
    &\lec  \| \vv-\ww -\{ \vv - \ww \} \|_{L^1 \BBB  } \| \vv -\{ \vv \} \|_{L^1 \BBB }  \lec \epsilon d,
    \end{split}
    \eqne
as required.\\

As for $\delta g$ we have
\[\begin{split}
\| \delta g_1 + \delta g_2  \|_{L^1 \B  } & \lec \| (\aa - \bb ) \na (-\Delta )^{-1} \aa\|_{L^1 \B  } + \| \bb \na (-\Delta )^{-1} (\aa - \bb ) \|_{L^1 \B  } \\
&\hspace{5cm}  + \| (\aa - \bb ) \vv_t \|_{L^1 \B  } + \| \bb (\vv-\ww )_t \| _{L^1 
\B  }\\
& \lec \| \aa - \bb \|_{L^\infty \BB  } \left( \| \aa -\{ \aa \} \|_{L^1 \UB } + \| \bb - \{ \bb \} \|_{L^1 \UB } + \| v_t  \|_{L^1 \B  }   \right)
\\
&
\hspace{5cm} + \| \bb \|_{L^\infty \BB   } \| \vv_t - \ww_t \|_{L^1 \B  }\\
&\lec \epsilon \| (\vv, \aa ) - (\ww , \bb ) \|
\end{split}
\]
as in \eqref{g_first_2}. On the other hand, the first inequality in \eqref{Delta_comm_conseq} gives
\[\begin{split}
\| \delta g_3 \|_{L^1 \B } &\lec \| \overline{A} - I \|_{L^\infty \BB  } \left( 1+  \| \overline{A} - I \|_{L^\infty \BB  }\right) \| \vv-\ww -\{ \vv - \ww \}  \|_{L^1 \BBB }\\
& \lec \epsilon \| \vv-\ww -\{ \vv - \ww \} \|_{L^1 \BBB }.
\end{split}
\]
As for $\delta g_4$ we have, as in \eqref{Delta_comm_pre}
\eqnb\label{Delta_v-w}\begin{split}
\Delta_\vv - \Delta_\ww &= \div_\vv (\na_\vv - \na_\ww ) + (\div_\vv - \div_\ww ) \na \ww \\
&= \overline{A}_{ji} \p_j \left( \left(\overline{A}_{ki} - \overline{\overline{A}}_{ki} \right) \p_k \right)+ \left(\overline{A}_{ji} - \overline{\overline{A}}_{ji} \right)\p_j \p_i,
\end{split}\eqne
which implies (as in \eqref{Delta_commutator}) that 
\eqnb\label{Delta_v-w_inBs}\begin{split}
\| (\Delta_\vv - \Delta_\ww )\ww \|_{B^s} &\leq \| \overline{A} \|_{\BB } \left\| \left( \overline{A} - \overline{\overline{A}} \right) \na \ww \right\|_{B^{s+1}} +  \left\| \left(\overline{A} - \overline{\overline{A}} \right):  D^2 \ww \right\|_{B^s}\\
&\lec \left( 1+ \| \overline{A} - I \|_{\BB } \right) \| \overline{A} - \overline{\overline{A}} \|_{\BB } \| \ww -\{ \ww \}  \|_{B^{s+2}}
\end{split}
\eqne
for $s=d/p-2, d/p-1$ and each fixed time, which in turn (similarly to \eqref{Delta_comm_conseq}) gives that
\[\begin{split}
\| \delta g_4  \|_{L^1 \B } &= \| (\Delta_\vv - \Delta_\ww) \ww  \|_{L^1 \B } \lec \left\| \overline{A} - \overline{\overline{A}} \right\|_{L^\infty  \BB  } \| \ww - \{ \ww \} \|_{L^1 \BBB  } \\
&\lec \epsilon \| \vv-\ww -\{ \vv - \ww \}  \|_{L^1 \BBB },
\end{split}
\]
where we used \eqref{A-A} in the last inequality.

As for $\delta g_5$ and $\delta g_6$ \eqref{cubic_term} gives that
\[\begin{split}
\| \delta g_5 + \delta g_6 \|_{L^1 \B } &\lec \| \aa - \bb \|_{L^\infty \BB  } \| \vv - \{ \vv \}  \|_{L^1 \BBB  } \| \aa - \{ \aa \} \|_{L^1 \UB }\\
& \qquad + \left( 1+ \| \bb \|_{L^\infty \BB  }\right) \| \vv - \{ \vv \} \|_{L^1 \BBB  } \| \aa - \bb - \{ \aa - \bb \} \|_{L^1 \UB }\\
&\lec \| \vv - \{ \vv \} \|_{L^1 \BBB  } \left(  \| \aa - \bb \|_{L^\infty \BB } + \| \aa - \bb - \{ \aa - \bb \}\|_{L^1 \UB } \right) .
\end{split}
\]
Finally, $\delta g_7 =(1+\bb ) \left( \na_\ww (-\Delta_\ww )^{-1} - \na_{\vv } (-\Delta_{\vv } )^{-1} \right) \bb$ is the most challenging term, where, as in \eqref{lms_deltah} above, we need to extract $v-w$ from the difference of differential operators $\na_\ww (-\Delta_\ww )^{-1} - \na_{\vv } (-\Delta_{\vv } )^{-1} $. To this end we need to explore the steps leading to \eqref{cubic_term} a bit further. Namely, setting
\[
f\coloneqq (-\Delta_\ww )^{-1} (\bb - \{\bb \} ) , \qquad g \coloneqq (-\Delta_\vv )^{-1} (\bb - \{ \bb \} ),
\]
we see that
\[
\delta g_7 = (1+\bb ) \left(  \na_\ww (f-g ) + (\na_\ww - \na_\vv ) g  \right), 
\]
 recalling \eqref{def_A} that $\na_\ww = \overline{\overline{A}}^T \na$ we obtain
\eqnb\label{dg7}\begin{split}
\| \delta g_7  \|_{ \B } &\lec \underbrace{ \left( 1 + \| \bb \|_{ \BB } \right) }_{\lec 1} \| \na_\ww (f-g ) + (\na_\ww - \na_\vv ) g  \|_{ \B }\\
& \lec  \underbrace{ \| \overline{\overline{A}} \|_{ \BB  } }_{\lec 1} \| f-g \|_{\BB } + \| \overline{\overline{A}} - \overline{A} \|_{ \BB  }  \|  g  \|_{ \BB }.
\end{split}
\eqne
In order to estimate the first term on the right-hand side we note that
\[
-\Delta (f-g) = (\Delta_\ww - \Delta )f - (\Delta_\vv -\Delta ) g = (\Delta_w - \Delta_v )f + (\Delta_v - \Delta ) (f-g) .
\]
This lets us use \eqref{Delta_v-w} with $s=-1$ (and $(\Delta_\vv - \Delta_\ww ) \ww$ replaced by $(\Delta_\ww - \Delta_\vv ) f$) and \eqref{Delta_commutator} with $s=-1$ (and $(\Delta_u - \Delta )u$ replaced by $(\Delta_v - \Delta ) (f-g)$) to obtain
\[
\begin{split}
\| f-g \|_{ \BB } &\lec \| (\Delta_\ww - \Delta_\vv )f \|_{ \UB } + \| (\Delta_v - \Delta ) (f-g )  \|_{\UB }   \\
&\lec \underbrace{ \left( 1+ \| \overline{\overline{A}} - I \|_{ \BB } \right)}_{\lec 1}  \|  \overline{\overline{A}}-\overline{A}  \|_{ \BB }  \| f \|_{ \BB }   +\underbrace{ \| \overline{A}-I \|_{\BB  }}_{\lec \epsilon } \underbrace{ \left( 1+ \| \overline{A}-I \|_{ \BB } \right)}_{\lec 1}  \| f-g \|_{ \BB } .
\end{split}
\]
Thus, for sufficiently small $\epsilon >0$ we can absorb the last term on the left-hand side to obtain that
\[
\| f-g \|_{ \BB } \lec \|  \overline{\overline{A}}-\overline{A}  \|_{\BB }  \| f \|_{ \BB }
\]
at each time. Applying this in \eqref{dg7} and integrating in time we obtain
\[
\| \delta g_7 \|_{L^1 \B } \lec \|  \overline{\overline{A}}-\overline{A}  \|_{ L^\infty \BB } \left(  \| f \|_{ L^1 \BB } +  \| g \|_{ L^1  \BB  }  \right) 
\]
\[
\lec \| \vv-\ww - \{ \vv - \ww \} \|_{L^1 \BBB  } \| \bb - \{ \bb \} \|_{L^1 \UB } \lec \epsilon \|  \vv-\ww - \{ \vv - \ww \} \|_{L^1 \BBB  } ,
\]
as required, where we used \eqref{A-A} and \eqref{ellip_est} in the second inequality.
 \end{proof}

\section{Equivalence of the Eulerian and Lagrangian formulations}\label{sec_equiv}
In this section we show the equivalence of Theorem~\ref{thm_main} and Proposition~\ref{prop_main}.\\

We first show that Theorem~\ref{thm_main} $\Rightarrow $ Proposition~\ref{prop_main}.\\

To this end, given a solution $(\rho , v)$ in the Eulerian coordinates, we need to construct a Lagrangian map $X=X(t,y)$, so that $a(t,y) \coloneqq \rho (t,X(t,y))-1$, $u(t,y) \coloneqq v (t,X(t,y))$ is a solution in the Lagrangian coordinates (that is a solution of \eqref{eqs_lagrangian1}). 

In order to construct the Lagrangian map, we first prove the following a~priori estimate: given $X(t)$ exists for all $t\geq 0$, and $\| \na X - I \|_{L^\infty \BB } $ is sufficiently small (see \eqref{assump} below) then 
\eqnb\label{traj_est}
\| \nabla X - \mathrm{I} \|_{L^\infty \BBB } \leq C( \| X \|_{L^\infty \BBB } ) \| v \|_{L^1 \BBB } \lec \epsilon.
\eqne
Given \eqref{traj_est}, one can use a Picard iteration to construct $X$. In particular an appropriate choice of small $\epsilon >0$ guarantees the assumed smallness of $\| \na X - I \|_{L^\infty \BB } $. 

In order to prove \eqref{traj_est}, we first note that, since $X(t) \colon \T^d \to \T^d$ is a diffeomorphism, we have
\[
\int \left( \na X (t) - I \right) dy =0
\]
for each $t\geq 0$. Thus, in light of \eqref{der_in_out}, in order to show \eqref{traj_est} it suffices to verify that
\eqnb\label{lagr_impl}
\Delta X(t,y) = \int_0^t \Delta_y v (s,X(s,y)) \d s 
\eqne
 remains small in $\B$ for all times, where $\Delta_y = \nabla_y \cdot \nabla_y$ and we denoted by $\nabla_y $ the derivative with respect to the $y$ variable. Since 
\[
\Delta_y v (t,X(t,y)) = \p_{y_i} (\p_j v \circ X \p_{i} X_j ) = \p_k \p_j v \circ X \p_i X_k \p_i X_j + \p_j v \circ X  \Delta X_j 
\]
we have
\eqnb\label{tem}
\| \Delta_y (v\circ X ) \|_{\B } \leq \| D^2 v \circ X \|_{\B } \| \na X \|_{\BB }^2 + \| \p_j v \circ X \|_{\BB } \| \Delta X_j \|_{\B }
\eqne
At this point we would like to estimate  $\| \p_j v \circ X \|_{\BB }$ by $\| \na_y (\p_j v\circ X ) \|_{\B }$, so that we could estimate it by $\| D^2 v \circ X \|_{\B } \| \na X \|_{\BB }$, that is the same as the first term on the right-hand side above. 

However, this is not immediate as the average of $\p_j v \circ X $ does not necessarily vanish (recall \eqref{der_in_out}). Instead by adding and subtracting the average we obtain
\[
\| \p_j v \circ X \|_{\BB } \lec \| D^2 v \circ X \|_{\B } \| \na X \|_{\BB } + \{ \p_j v\circ X \},
\]
and by recalling the fact that $\na = A^T \na_y $ (see \eqref{div_euler}, for example) we can estimate the average, 
\[\begin{split}
\int \p_j v (t,X(t,y)) \d y &= \int  A_{kj}\p_{y_k} v (t,X(t,y)) \d y \\
&=  \int  (A_{kj}-\delta_{kj} ) \p_{y_k} v (t,X(t,y)) \d y \\
&\leq  \| A- I \|_\infty \| \na_y (v\circ X ) \|_\infty ,
\end{split}
\]
where $A\coloneqq (\na X)^{-1}$. This gives
\eqnb\label{tempp}
\| \na v \circ X \|_{\BB } \lec \| D^2 v \circ X \|_{\B } \| \na X \|_{\BB } + \| A- I \|_{\BB } \| \na v \circ X \|_{\BB } \| \na X \|_{\BB }.
\eqne
Thus if we suppose that 
\eqnb\label{assump}
\| \na X - I \|_{\BB } \leq  1/8C\| I \|_{\BB  },
\eqne
where $C>1$ is the implicit constant in \eqref{tempp} then $\| \na X \|_{\BB } \leq 2 \| I \|_{\BB }$ and the Neumann expansion gives (as in \eqref{neumann})
\eqnb\label{assump_cons}
\| A- I \|_{\BB } \leq \sum_{k\geq 1 } \| \na X - I \|_{\BB }^k \leq \frac{1}{\| I \|_{\BB } C} \sum_{k\geq 1} 8^{-k} \leq \frac{1}{4\| I \|_{\BB }C}.
\eqne
Thus the last term in \eqref{tempp} could be absorbed by the left-hand side to give
\[
\| \na v \circ X \|_{\BB } \lec \| D^2 v \circ X \|_{\B } \| \na X \|_{\BB } .
\]
Applying this in \eqref{tem} and taking the $\| \cdot \|_{L^\infty ((0,\infty ); \B ) }$ norm of \eqref{lagr_impl} gives
\eqnb\label{Delta_in_lagr}
\begin{split}
\| \Delta X \|_{L^\infty ((0,\infty ); \B )} &\leq \|  \Delta_y (v\circ X ) \|_{L^1 \B } \\
&\leq C( \| X \|_{L^\infty  \BBB }  )  \int_0^\infty \| D^2 v \circ X \|_{\B } \\
&\leq C( \| X \|_{L^\infty \BBB }  )  \int_0^\infty \| D^2 v \|_{\B },
\end{split}
\eqne
proving the a~priori estimate \eqref{traj_est}. \\

We note that, given $X$, we also obtain (as in \eqref{assump_cons}) that 
\[
\| A- I \|_{L^\infty \BB } \lec \epsilon.
\]
Setting $u(t,y) \coloneqq v (t,X(t,y))$, $a \coloneqq \rho (t,X(t,y)) -1$ we can use \eqref{diffeo_prop} to obtain that 
\[
\| u \|_{L^1 \BBB } \lec \| v \|_{L^1 \BBB } \lec \epsilon ,
\]
by the last two lines of \eqref{Delta_in_lagr}, as well as 
\[
\| \na a \|_{L^\infty \B } = \| \na  (\rho \circ X) \|_{L^\infty \B } \lec \| \na \rho \circ X \|_{L^\infty \B } \| \na X \|_{L^\infty \BB } \lec  \epsilon 
\] 
by \eqref{traj_est} and the assumption $\| \rho-1 \|_{L^\infty \BB } \lec \epsilon$. (Recall that the diffeomorphism property \eqref{diffeo_prop} is only valid for $s\in (0,1)$.) This and the fact that $\| \{ a \} \|_{L^\infty (0,\infty )} \lec \| \rho -1 \|_{L^\infty L^\infty }\lec \epsilon $  imply that
\[
\| a \|_{L^\infty \BB } \lec \epsilon.
\]
Another application of \eqref{diffeo_prop} and the chain rule gives that 
\[
\| u_t \|_{L^1 \B } \leq \| v_t \|_{L^1 \B} + \|  \na v \|_{L^1 \BB } \| \na X \|_{L^\infty \BB } \lec \epsilon.
\]
As for $a_t $ we use the apriori estimates \eqref{cons_of_lem}, \eqref{for_step1}, \eqref{for_step2} to obtain that
\[
\| a_t \|_{L^1 \BB } \lec \epsilon,
\]
as required. This completes the proof of  Proposition~\ref{prop_main}. \\

We now prove that Proposition~\ref{prop_main} $\Rightarrow $ Theorem~\ref{thm_main}.\\

To this end, one defines $v(t,x) \coloneqq u (t,X^{-1}(x))$, $ \rho (t,x) \coloneqq 1 + a (t,X^{-1} (t,x))$, and notes that the Lagrangian trajectory $X$ satisfying $\| \na X - I \|_{L^\infty \BB }\lec \epsilon $ is already given by Proposition~\ref{prop_main}. Moreover, in the Eulerian coordinates the structure of the equations \eqref{pressureless} allows us to control the $L^1$ norm in time of $\{ v \}$. 

To be more precise we first note that $\| A - I \|_{L^\infty \BB } \lec \epsilon $, by \eqref{smallness_I_A}, and so in particular $\| \na X^{-1} - I \|_{L^\infty L^\infty } \lec \epsilon$. Thus \eqref{diffeo_prop} implies that
\[
\| \rho - 1 \|_{L^\infty \B } = \| a \circ X^{-1} \|_{L^\infty \B } \lec \| a \|_{L^\infty \B } \lec \epsilon  ,
\]
while continuity of $\rho -1 $ in time with values in $\B$ follows from the continuity of $a$, a consequence of $\| a_t \|_{L^1 \B } \lec \epsilon$.

Similarly \eqref{diffeo_prop} implies that
\eqnb\label{001}\begin{split}
\| \Delta  v \|_{L^1 \B } &= \| (\Delta_u u )\circ X^{-1} \|_{L^1 \B } \lec \| \Delta_u u \|_{L^1 \B } \\
&\lec \| \Delta u \|_{L^1 \B}+ \| A-I \|_{\BB } \left( 1+ \| A-I \|_{\BB } \right) \| \Delta u \|_{\B } \lec \epsilon ,
\end{split}
\eqne
where we also used \eqref{Delta_commutator} in the second inequality. 

We now note that
\[
\{ v \} = \int \{ v \} \rho = \int ( \{ v \} - v ) \rho ,
\]
where we used the mass conservation $\int \rho = \int \rho_0 =1$ in the first equality and, in the second equality, we used the assumption $\int \rho_0 v_0 =0$ and the momentum conservation,
\[
    \frac{\d}{\d t}\int \rho v = - \int \rho \nabla \Psi  = -\frac12 \int  \nabla |\nabla \Psi|^2  =0 ,
\]
which follows from \eqref{pressureless}, where $\Psi \coloneqq K\rho $. Thus
\eqnb\label{cons_prop}
| \{ v \} | \lec  \| \rho \|_{L^\infty } \| v- \{ v \} \|_{L^p} \lec  \| \rho \|_{L^\infty } \|\Delta  v \|_{ \B } ,
\eqne
due to \eqref{der_in_out}. This and \eqref{001} implies that $\| v \|_{L^1 \BBB } \lec \epsilon $, as required.

It remains to show the estimate for $v_t$ in $ L^1 \B$. To this end we note that $u_t = v_t \circ X + (v\circ X ) \cdot (\nabla v \circ X)$, which implies that
\[
v_t = u_t \circ X^{-1} - (u \cdot \na_u u) \circ X^{-1} .
\]
Thus \eqref{diffeo_prop} gives
\[\begin{split}
\| v_t \|_{L^1 \B   } &\leq \| u_t \circ X^{-1} \|_{L^1 \B } + \| (u \cdot \na_u u) \circ X^{-1} \|_{L^1 \B } \\
&\lec \| u_t \|_{L^1 \B } + \| u \cdot \na_u u \|_{L^1 \B }\\
& \lec \epsilon + \| u \|_{L^\infty \B } \| \na_u u\|_{L^1 \BB}\lec \epsilon (1+ \| A-I \|_{L^\infty \BB } \| \Delta u \|_{L^1 \B} )\lec \epsilon ,
\end{split}
\]
where we used \eqref{algebra_prop1} twice in the third line. 

\section*{Acknowledgements}
P.B.M. was supported by the Polish National Science Centre’s Grant No. 2018/30/M/ST1/00340 (HARMONIA). 

W.S.O. was supported in part by the Simons Foundation.

\bibliographystyle{plain}
\bibliography{literature}

\end{document}